\newtheorem{theorem}{Theorem}[section]
\theoremstyle{definition}
\newtheorem{definition}[theorem]{Definition}
\newtheorem{example}[theorem]{Example}
\newtheorem{corollary}[theorem]{Corollary}
\theoremstyle{remark}
\numberwithin{equation}{section}
\begin{document}

\title{ Classes of $F$-hyperideals in a Krasner $F^{(m,n)}$-hyperring
  }

\author{M. Anbarloei}
\address{Department of Mathematics, Faculty of Sciences,
Imam Khomeini International University, Qazvin, Iran.
}

\email{m.anbarloei@sci.ikiu.ac.ir }




\keywords{  prime $F$-hyperideal, maximal $F$-hyperideal, primary $F$-hyperideal, Krasner $F^{(m,n)}$-hyperring.}

\begin{abstract}
Krasner $F^{(m,n)}$-hyperrings were introduced and  investigated by Farshi and Davvaz. In this paper, our purpose is to  deﬁne and characterize  three particular classes of $F$-hyperideals in a Krasner $F^{(m,n)}$-hyperring, namely prime $F$-hyperideals, maximal $F$-hyperideals and  primary $F$-hyperideals, which extend similar concepts of ring context. Furthermore, we  examine the relations between these structures. Then a number of major conclusions are given to explain the general framework of
these structures.

\end{abstract}
\maketitle
\section{Introduction}
 As it is well known, the notion of a fuzzy set was introduced by Zadeh in 1965 \cite{zadeh}.  After the pioneering work of Zadeh,  the fuzzy sets have been used in the reconsideration of classical mathematics. A number of articles have applied fuzzy concepts to algebraic structures. Rosenfeld introduced and studied fuzzy sets in the context of group theory and formulated the notion of a fuzzy subgroup of a group \cite{Rosenfeld}. The notions of fuzzy subrings and ideals were defined by Liu \cite{liu}.  A considerable amount of work has been done on fuzzy ideals. In particular, many papers were written on prime fuzzy ideals.
 
Hyperstructure theory was born in 1934 when     Marty \cite{s1}, a French mathematician, defined  the concept of a hypergroup as a generalization of groups. Many papers and books concerning hyperstructure theory have
appeared in literature. For instance, you can see the papers \cite {s2, s3, davvaz1, davvaz2, s4}. The simplest algebraic hyperstructures which possess the properties of closure and associativity are called  semihypergroups. 
$n$-ary semigroups and $n$-ary groups are algebras with one $n$-ary operation which is associative and invertible in a generalized sense. The idea of investigations of $n$-ary algebras goes back to Kasner’s lecture \cite{s5} at the 53rd annual meeting of the American Association of the Advancement of Science in 1904.    In 1928, Dorente wrote the first paper concerning the theory of $n$-ary groups \cite{s6}. Later on, Crombez and
Timm \cite{s7, s8} defined  the notion of the $(m, n)$-rings and their quotient structures.  
 The $n$-ary hyperstructures  have been studied in  \cite{l1, l2, l3, ma, rev1}. In \cite{s9}, Davvaz and Vougiouklis introduced a generalization of the notion of a hypergroup in the sense of Marty and a generalization of an $n$-ary group,   which is called $n$-ary hypergroup. Mirvakili and Davvaz \cite{cons} defined $(m,n)$-hyperrings and obtained several results in this respect. 
One important class of hyperrings was introduced by Krasner, where the addition is a hyperoperation, while the multiplication is an ordinary binary operation, which is called Krasner hyperring.  In \cite{d1},  a generalization of the Krasner hyperrings, which is a subclass of $(m,n)$-hyperrings, was defined by Mirvakili and Davvaz. It is called Krasner $(m,n)$-hyperring. A Krasner $(m, n)$-hyperring is an algebraic hyperstructure $(R, f, g)$, or simply $R$,  which satisfies the following axioms:
(1) $(R, f$) is a canonical $m$-ary hypergroup;
(2) $(R, g)$ is a $n$-ary semigroup;
(3) the $n$-ary operation $g$ is distributive with respect to the $m$-ary hyperoperation $f$ , i.e., for every $a^{i-1}_1 , a^n_{ i+1}, x^m_ 1 \in R$, and $1 \leq i \leq n$,
$g(a^{i-1}_1, f(x^m _1 ), a^n _{i+1}) = f(g(a^{i-1}_1, x_1, a^n_{ i+1}),..., g(a^{i-1}_1, x_m, a^n_{ i+1}))$;
(4) $0$ is a zero element (absorbing element) of the $n$-ary operation $g$, i.e., for every $x^n_ 2 \in R$ we have 
$g(0, x^n _2) = g(x_2, 0, x^n _3) = ... = g(x^n_ 2, 0) = 0$. Ameri and Norouzi in \cite{sorc1} introduced some important
hyperideals such as Jacobson radical, n-ary prime and primary hyperideals, nilradical, and n-ary multiplicative subsets of Krasner $(m, n)$-hyperrings. For more study on Krasner $(m,n)$-hyperring refer to  \cite{mah2, mah3, asadi, rev2,  d1, nour,   rev1, Yassine}.

The connections between  hyperstructures and fuzzy sets is considered as many noticeable researches. The concept of $F$-polygroups was introduced and analyzed by Zahedi and Hasankhani in \cite{Zahedi1, Zahedi2}. The fuzzy hyperring notion was defined and studied in \cite {Leoreanu1}. In this regards Motameni and et. al. continued the study of  the notion of fuzzy hyperideals of a fuzzy hyperring. They defined and characterized prime fuzzy hyperideals and maximal fuzzy hyperideals and studied  the hyperideal transfer through a fuzzy hyperring homomorphism. Zhan and et. al. in \cite{zhan} concentrated on the quasi-coincidence of a fuzzy interval value with an interval valued fuzzy set. Davvaz in \cite{davvaz2} introduced the notion of a fuzzy hyperideal of a Krasner $(m,n)$-hyperring
and to extended the fuzzy results to Krasner $(m,n)$-hyperring.
 Let $G$ be an arbitrary set and  $L=[0,1]$ be the unit interval. Let $L^G$ (resp. $L^G_*)$ be the set of all  fuzzy subsets of $G$. An $F$-hyperoperation on $G$ is a function $\circ$ from $G \times G$ into $L^G_*$. If $\mu, \gamma \in L_*^G$ and $x \in G$, then $x \circ \mu=\bigcup_{a \in supp(\mu)}x \circ \mu$ and $\mu \circ \gamma=\bigcup_{a \in supp(\mu), b \in supp(\gamma)}a \circ b$ such that $supp(\mu)=\{a \in G \ \vert \mu(a) \neq 0\}$. The couple $(G,\circ)$ is called an $F$-polygroup if the following conditions
are satisfied: (1) $(a \circ b) \circ c=a \circ (b \circ c)$ for all $a,b,c \in G$, (2) there exists $e \in G$ with $a \in supp(a \circ e \cap e \circ a)$, for all $a \in G$, (3) for each $a \in G$, there exists a unique element $a^{-1} \in G$ with $e \in supp(a \circ a^{-1} \cap a^{-1} \circ a)$, (4) $c \in supp(a \circ b)$ implies that $a \in supp(c \circ b^{-1})$ implies that $b \in supp(a^{-1} \circ c)$, for all $a,b,c \in G$. Indeed, a fuzzy hyperoperation assigns to each pair of elements of $G$ a non-zero fuzzy subset of $G$, while a hyperoperation assigns to each pair of elements of $G$ a non-empty subset of $G$. \\
The concepts of Krasner $F^{(m,n)}$-hyperrings and $F$-hyperideals were defined in \cite{asli} by Farshi and Davvaz. 
In this paper, we continue the study of $F$-hyperideals of a Krasner $F^{(m,n)}$-hyperring, initiated in \cite{asli}. We  deﬁne and analyze there particular types of $F$-hyperideals in a Krasner $F^{(m,n)}$-hyperring, maximal $F$-hyperideals and  primary $F$-hyperideals. We investigate the connections between them. Moreover, we  introduce the concepts of $F$-radical, quotient Krasner $F^{(m,n)}$-hyperring and Jacobson radical. The overall
framework of these structures is then explained. It is shown  (Theorem \ref{akhar})  that if   $\mathfrak{Q}$ is an primary $F$-hyperideal of a Krasner $F^{(m,n)}$-hyperring $(\mathfrak{R},f,g)$, then $\sqrt{\mathfrak{Q}}^F$ is a prime $F$-hyperideal of $\mathfrak{R}$.
\section{Preliminaries}
In this section we recall some basic terms and definitions  from \cite{asli} which we need to develop our paper.\\
 A fuzzy subset of $G$ is a function  $\mu : G \longrightarrow L$ such that $L$ is the unit interval  $[0,1] \subseteq \mathbb{R}$. The set of all fuzzy subsets of $G$ is denoted by $L^G$ . Let $\mu , \gamma \in L^G$ and $\{\mu_\alpha \ \vert \ \alpha \in \Lambda\} \subseteq L^G$.  We define  the fuzzy subsets $\mu \cup \gamma$ and $\bigcup_{\alpha \in \Lambda} \mu_ \alpha$ as follows:
 \[(\mu \cup \gamma)(a)=max\{\mu(a),\gamma(a)\}\]
 and
 \[(\bigcup_{\alpha \in \Lambda} \mu_{\alpha})(a)=\bigvee_{\alpha \in \Lambda}\{\mu_\alpha(a)\}\]
For all $a \in G$. The set, $\{a \in G  \ \vert \ \mu(a) \neq 0
 \}$ is called the support of $\mu$ and is denoted by $supp(\mu)$. When $H \subseteq G$ and $t \in L$, we define $H_t \in L^G$ as follows:
\[
 H_t(a)=\left\{
 \begin{array}{lr}
 t&\text{if $a \in H$,}\\
 0&\text{if $a \notin H$.}
 \end{array} \right.\]
 In particular, if $H$ is a singleton, say $\{x\}$, then $\{x\}_t$ is  referred to as fuzzy point and, sometimes, denoted by $x_t$. The characteristic function of set $H$ is denoted by $\chi_H$. Let $L^G_*=L^G-\{0\}$. For a positive integer $n$, an $F^n$-hyperoperation on $G$ is a mapping $f$ from $G^n$ to $L^G_*$. This means that for any $a_1,\cdots, a_n \in G$, $f(a_1,\cdots,a_n)$ is a non-
zero fuzzy subset of $G$. If for all $a_1,\cdots,a_n \in G$, $supp(f(a_1,\cdots,a_n))$
is singleton, then $f$ is called an $F^n$-operation. 

Notice that the sequence $a_i, a_{i+1},..., a_j$ 
will be denoted by $a^j_i$. For $j< i$, $a^j_i$ is the empty symbol. Using this notation,

$\hspace{2cm}f(a_1,..., a_i, b_{i+1},..., b_j, c_{j+1},..., c_n)$ \\
will be written as $f(a^i_1, b^j_{i+1}, c^n_{j+1})$. The  expression will be written in the form $f(a^i_1, b^{(j-i)}, c^n_{j+1})$, when $b_{i+1} =... = b_j = b$ .

For $\mu_1^n \in L^G_*$, we define $f(\mu_1^n)$ as follows:
\[f(\mu_1^n)=\bigcup_{a_i \in supp(\mu_i)}f(a_1^n)\].
Let $a_1^n,a \in G$, $H \in P^*(G)$ and $\mu_1^n,\mu \in L^G_*$. Then, for $1 \leq i \leq n$

(1) $f(a_1^{i-1},\mu,a_{i+1}^n)$ denotes $f(\chi_{\{a_1\}},\dots, \chi_{\{a_{i-1}\}},\mu,\chi_{\{a_{i+1}\}},\dots, \chi_{\{a_n\}})$,

(2) $f(a_1^{i-1},H,a_{i+1}^n)$ denotes $f(\chi_{\{a_1\}},\dots, \chi_{\{a_{i-1}\}},\chi_H,\chi_{\{a_{i+1}\}},\dots, \chi_{\{a_n\}})$,

(3) $f(\mu_1^{i-1},a,\mu_{i+1}^n)$ denotes $f(\mu_1^{i-1},\chi_{\{a\}},\mu_{i+1}^n)$,

(4) $f(\mu_1^{i-1},H,\mu_{i+1}^n)$ denotes $f(\mu_1^{i-1},\chi_H,\mu_{i+1}^n)$.\\
If for every $1 \leq i < j \leq n$ and all $a_1^{2n-1} \in G$,

$\hspace{1cm}f(a^{i-1}_1, f(a_i^{n+i-1}), a^{2n-1}_{n+i}) = f(a^{j-1}_1, f(a_j^{n+j-1}), a_{n+j}^{2n-1}),$\\
then the $F^n$-hyperoperation ($F^n$-operation) $f$ is called associative. $G$ with the associative $F^n$-hyperoperation ($F^n$-operation)  is called $F^n$-semihypergroup  ($F^n$-semigroup).
\begin{definition}
Let $(G,f)$ be a $F^m$-semihypergroup. Suppose that $G$ is equipped with a unitary operation $^{-1}:G \longrightarrow G$. The couple $(G,f)$ is called a canonical $F^m$-hypergroup, if

(1) $G$ has an $F$-identity element, i.e., there exists an element $e \in G$ such that for every $a \in G$, $supp(f(a,e^{(n-1)}))=\{a\}$,

(2) $a \in supp(f(a_1^m))$ implies $a_i \in supp(f(a_1^{-1},\dots,a_{i-1}^{-1},a,a_{i+1}^{-1},\dots,a_n^{-1}))$, for all $a_1^m,a \in G$ and $1 \leq i \leq n$,

(3) for all $a_1^m$ and for all $\sigma \in \mathbb{S}_m$, $f(a_1^m)=f(a_{\sigma(1)}^{\sigma(m)})$.
\end{definition}
\begin{definition}
A Krasner $F^{(m,n)}$-hyperring is an algebraic hyperstructure $(\mathfrak{R},f,g)$, or simply $\mathfrak{R}$, which satisfies the following axioms:

(i) $(\mathfrak{R},f)$ is a canonical $F^m$-hypergroup,

(ii) $(\mathfrak{R},g)$ is an $F^n$-semigroup,

(iii) for every $x_1^{i-1},x_{i+1}^n, a_1^m \in \mathfrak{R}$ and $1 \leq i \leq n$, 
\[g(x_1^{i-1},f(a_1^m),x_{i+1}^n)=f(g(x_1^{i-1},a_1,x_{i+1}^n),\dots,g(x_1^{i-1},a_m,x_{i+1}^n)),\]

(iv) for every $a_2^n \in \mathfrak{R}$,
$supp(g(e,a_2^n))=\{e\}$ where $e$ is the $F$-identity element of $(\mathfrak{R},f)$.
\end{definition}
$\mathfrak{R}$ is called commutative Krasner $F^{(m,n)}$-hyperring if $g(a_1^n)=g(x_{\sigma(1)}^{\sigma(n)})$, for all $a_1^n \in \mathfrak{R}$ and for every $\sigma \in \mathbb{S}_n$. In the sequel, we assume that all Krasner $F^{(m,n)}$-hyperrings are commutative. We say that $\mathfrak{R}$ is with scalar $F$-identity if there exists an element $e^{\prime}$ such that $supp(g(a,e^{\prime^{(n-1)}}))=\{a\}$ for all $a \in \mathfrak{R}$.
\begin{definition}
Let $(\mathfrak{R},f,g)$ be a Krasner $F^{(m,n)}$-hyperring. A non-empty subset $S$ of $\mathfrak{R}$ is support closed under $f$ and $g$ whenever for all $a_1^m,b_1^n \in S$, $supp(f(a_1^m)) \subseteq S$ and $supp(g(b_1^n)) \subseteq S$. $S$ is called a Krasner $F$-subhyperring of $(\mathfrak{R},f,g)$ if $(S,f,g)$ is itself a Krasner $F^{(m,n)}$-hyperring. A Krasner $F$-subhyperring $\mathfrak{I}$ of $(\mathfrak{R},f,g)$ is said to be an  $F$-hyperideal if $supp(g(a_1^{i-1},\mathfrak{I},a_{i+1}^n)) \subseteq \mathfrak{I}$ for all $a_1^n \in \mathfrak{R}$ and $1 \leq i \leq n$.
\end{definition}
\begin{definition}
Let $(\mathfrak{R}_1, f_1, g_1)$ and $(\mathfrak{R}_2, f_2, g_2)$ be two Krasner $(m, n)$-hyperrings. A mapping
$h : \mathfrak{R}_1 \longrightarrow \mathfrak{R}_2$ is called a homomorphism if for all $x^m _1 \in \mathfrak{R}_1$ and $y^n_ 1 \in \mathfrak{R}_1$ we have

(1) $h(e_{\mathfrak{R}_1})=e_{\mathfrak{R}_2}$ such that $e_{\mathfrak{R}_1}$ and $e_{\mathfrak{R}_2}$ are F -identity elements of $\mathfrak{R}_1$ and $\mathfrak{R}_2$, respectively,

(2) $h(supp(f_1(x_1,..., x_m))) = supp(f_2(h(x_1),...,h(x_m)))$

(3) $h(supp(g_1(y_1,..., y_n))) =supp(g_2(h(y_1),...,h(y_n))). $
\end{definition}

\section{Prime F-hyperideals and maximal $F$-hyperideals}
We start this section by introducing the concept of prime $F$-hyperideals  of a Krasner $F^{(m,n)}$-hyperring $(\mathfrak{R},f,g)$.
\begin{definition}
An $F$-hyperideal $\mathfrak{P}$ of a Krasner $F^{(m,n)}$-hyperring $(\mathfrak{R},f,g)$ is called a prime $F$-hyperideal if for  all $\mu_1^n \in L^{\mathfrak{R}}_*$, $supp(g(\mu_1^n)) \subseteq \mathfrak{P}$ implies that $supp(\mu_i) \subseteq \mathfrak{P}$ for some $1 \leq i \leq n$.
\end{definition}

Our first theorem characterizes the prime $F$-hyperideals of a Krasner $F^{(m,n)}$-hyperring $(\mathfrak{R},f,g)$. 
\begin{theorem} \label{one}
Let $\mathfrak{P}$ is an $F$-hyperideal of a Krasner $F^{(m,n)}$-hyperring $(\mathfrak{R},f,g)$. Then $\mathfrak{P}$ is a prime $F$-hyperideal if and only if for all $a_1^n \in \mathfrak{R}$, $supp(g(a_1^n)) \subseteq \mathfrak{P}$ implies that $a_i \in \mathfrak{P}$ for some $1 \leq i \leq n$. 
\end{theorem}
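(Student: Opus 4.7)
The plan is to prove both directions by exploiting the definition $g(\mu_1^n)=\bigcup_{a_i\in supp(\mu_i)}g(a_1^n)$ together with the fact that the support of a union of fuzzy subsets is the union of the supports. This passes back and forth between fuzzy-subset statements and element-wise statements in a clean way.

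For the forward implication, I would assume $\mathfrak{P}$ is a prime $F$-hyperideal and fix $a_1^n\in\mathfrak{R}$ with $supp(g(a_1^n))\subseteq\mathfrak{P}$. Setting $\mu_i=\chi_{\{a_i\}}$ for each $i$ gives $\mu_i\in L^{\mathfrak{R}}_*$ with $supp(\mu_i)=\{a_i\}$, and by the convention $g(\chi_{\{a_1\}},\dots,\chi_{\{a_n\}})=g(a_1^n)$. Hence $supp(g(\mu_1^n))=supp(g(a_1^n))\subseteq\mathfrak{P}$, and the primality hypothesis yields $supp(\mu_i)=\{a_i\}\subseteq\mathfrak{P}$ for some $i$, i.e.\ $a_i\in\mathfrak{P}$.

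For the reverse implication, suppose every element-tuple $a_1^n$ satisfying $supp(g(a_1^n))\subseteq\mathfrak{P}$ has some $a_i\in\mathfrak{P}$. Given $\mu_1^n\in L^{\mathfrak{R}}_*$ with $supp(g(\mu_1^n))\subseteq\mathfrak{P}$, I would first observe that
\[
supp(g(\mu_1^n))=\bigcup_{a_i\in supp(\mu_i)}supp(g(a_1^n)),
\]
so that $supp(g(a_1^n))\subseteq\mathfrak{P}$ for every choice of $a_i\in supp(\mu_i)$, $1\le i\le n$. Then I would argue by contradiction: if no $supp(\mu_i)$ is contained in $\mathfrak{P}$, pick for each $i$ some $a_i\in supp(\mu_i)$ with $a_i\notin\mathfrak{P}$. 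The tuple $a_1^n$ satisfies $supp(g(a_1^n))\subseteq\mathfrak{P}$ yet no coordinate lies in $\mathfrak{P}$, contradicting the assumed element-wise condition. Hence $supp(\mu_i)\subseteq\mathfrak{P}$ for some $i$, giving primality.

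The only subtle step is the identification $supp\bigl(\bigcup_j\nu_j\bigr)=\bigcup_j supp(\nu_j)$, which follows immediately from the pointwise-max definition of the union of fuzzy subsets; I would state this once and then invoke it. Everything else is a pigeonhole-style selection of witnesses, so I expect no serious obstacle beyond being careful about the translation between $\mu$-language and $a$-language dictated by the notational conventions (1)–(4) of the preliminaries.
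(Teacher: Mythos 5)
Your proposal is correct and follows essentially the same route as the paper's proof: the forward direction via $\mu_i=\chi_{\{a_i\}}$ and the reverse direction by contradiction, choosing witnesses $a_i\in supp(\mu_i)\setminus\mathfrak{P}$. The only cosmetic difference is that you justify the key inclusion $supp(g(a_1^n))\subseteq supp(g(\mu_1^n))$ directly from the definition of $g(\mu_1^n)$ as a union, whereas the paper cites Proposition 3.2 of the Farshi--Davvaz reference for the same fact.
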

\begin{proof}
$\Longrightarrow$ Let $\mathfrak{P}$ is a prime $F$-hyperideal of $\mathfrak{R}$. Let for $a_1^n \in \mathfrak{R}$, $supp(g(a_1^n)) \subseteq \mathfrak{P}$. Then $supp(g(\chi_{\{a_i\}},\dots,\chi_{\{a_n\}})) \subseteq \mathfrak{P}$. Since $\mathfrak{P}$ is a prime $F$-hyperideal of $\mathfrak{R}$, then we have $supp(\chi_{\{a_i\}})) \subseteq \mathfrak{P}$ for some $1 \leq i \leq n$. This implies that $\{a_i\} \subseteq \mathfrak{P}$ and so $a_i \in \mathfrak{P}$, as needed.\\
$\Longleftarrow$Let $supp(g(\mu_1^n)) \subseteq \mathfrak{P}$ for some $\mu_1^n \in L^{\mathfrak{R}}_*$. Let for all $1 \leq i \leq n$, $supp(\mu_i) \nsubseteq \mathfrak{P}$. Then for each $1 \leq i \leq n$, there exist an element $a_i \in supp(\mu_i)$ such that $a_i \notin \mathfrak{P}$. By Proposition 3.2 in \cite{asli}, $supp(g(a_1^n)) \subseteq supp(g(\mu_1^n))$. Therefore we have $supp(g(a_1^n)) \subseteq \mathfrak{P}$. By the hypothesis,  there exists $1 \leq i \leq n$ such that $a_i \in \mathfrak{P}$ and this is a contradiction. Thus $\mathfrak{P}$ is a prime $F$-hyperideal of $\mathfrak{R}$.
\end{proof}
\begin{example}
Let $(\mathbb{Z},+,\dot)$ be the ring of integers and $t_1,t_2 \in (0,1]$. We define an $F^{m}$-operation $f$ and an $F^n-operation$ $g$ on $\mathbb{Z}$ as follows:
\[f(a_1,\cdots,a_m)=(a_1+\cdots+a_m)_{t_1} \ \text{ for all} \ a_1,\cdots,a_m \in \mathbb{Z}\]
$\hspace{2cm}g(a_1,\cdots,a_n)=(a_1.\cdots.a_n)_{t_2} \hspace{0.8cm} \text{ for all} \   a_1,\cdots,a_n \in \mathbb{Z}.$\\
It is easy to verify that $(\mathbb{Z},f,g)$ is a Krasner $F^{(m,n)}$-hyperring. All $F$-hyperideals $p\mathbb{Z}$, where $p$ is a prime natural number are prime $F$-hyperideals of Krasner $F^{(m,n)}$-hyperring $(\mathbb{Z},f,g)$.
\end{example}
\begin{example}
Consider the Krasner $F^{(m,n)}$-hyperring $(G,f,g)$, given in Example 3.3 in \cite{asli}. Let $a \in G$. Then $\{e,a\}$ is a hyperideal of $G$ but it is not a prime $F$-hyperideal of $G$.
\end{example}

Let $\mathfrak{I}$ be a normal $F$-hyperideal of a Krasner $F^{(m,n)}$-hyperring $(\mathfrak{R},f,g)$.  The set $[\mathfrak{R}:\mathfrak{I}^*]=\{\mathfrak{I}^*[x] \ \vert \ x \in \mathfrak{R}\}$ is a Krasner $(m,n)$-hyperring with $m$-hyperoperation $f_{\vert I^*}$ and $n$-operation  $g_{\vert I^*}$ as follows:

\[f_{\vert I^*}({I^*}_{[x_1]}^{[x_n]})=\{I^*[x]\},\hspace{0.5cm} \forall x \in supp(f(x_1^m))\]

$\hspace{2.5cm}g_{\vert I^*}({I^*}_{[x_1]}^{[x_n]})=\{I^*[supp(g(x_1^n))]\}$.\\
Thus $[\mathfrak{R}:\mathfrak{I}^*]_i$ is a Krasner $F^{(m,n)}$-hyperring (for more details refer to \cite{asli}).
Now, we determine when the $F$-hyperideal $[\mathfrak{J}:\mathfrak{I}^*]_i$ of $[\mathfrak{R}:\mathfrak{I}^*]_i$ is  prime.
\begin{theorem}
Let $\mathfrak{I}$ be a normal $F$-hyperideal of a Krasner $F^{(m,n)}$-hyperring $(\mathfrak{R},f,g)$ and let  $\mathfrak{J}$ be an $F$-hyperideal of $(\mathfrak{R},f,g)$. If $\mathfrak{J}$ is a prime $F$-hyperideal of $\mathfrak{R}$, then $[\mathfrak{J}:\mathfrak{I}^*]_i$ is a prime $F$-hyperideal of $[\mathfrak{R}:\mathfrak{I}^*]_i$.
\end{theorem}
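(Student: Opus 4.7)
The plan is to reduce the problem, via Theorem \ref{one}, from the fuzzy-subset formulation of primeness to its elementwise form, and then lift the primeness of $\mathfrak{J}$ directly through the quotient construction $[\mathfrak{R}:\mathfrak{I}^*]_i$. Since $[\mathfrak{J}:\mathfrak{I}^*]_i$ is already known (by the construction of the quotient recalled above and the results of \cite{asli}) to be an $F$-hyperideal of $[\mathfrak{R}:\mathfrak{I}^*]_i$, the only real content is checking the prime condition on elements of the quotient.

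First I would pick arbitrary cosets $\mathfrak{I}^*[x_1],\dots,\mathfrak{I}^*[x_n]\in [\mathfrak{R}:\mathfrak{I}^*]_i$ and assume that
\[supp\bigl(g_{|I^*}(\mathfrak{I}^*[x_1],\dots,\mathfrak{I}^*[x_n])\bigr)\subseteq [\mathfrak{J}:\mathfrak{I}^*]_i.\]
Using the defining formula $g_{|I^*}(\mathfrak{I}^*[x_1],\dots,\mathfrak{I}^*[x_n])=\{\mathfrak{I}^*[supp(g(x_1^n))]\}$, this inclusion unpacks to the statement that $\mathfrak{I}^*[y]\in [\mathfrak{J}:\mathfrak{I}^*]_i$ for every $y\in supp(g(x_1^n))$, which in turn translates (by the definition of the quotient set $[\mathfrak{J}:\mathfrak{I}^*]_i$ associated to the $F$-hyperideal $\mathfrak{J}$) into $y\in \mathfrak{J}$. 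Hence $supp(g(x_1^n))\subseteq \mathfrak{J}$.

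At this point I would invoke the primeness of $\mathfrak{J}$ in its elementwise form given by Theorem \ref{one}: there exists some index $i$ with $x_i\in \mathfrak{J}$, whence $\mathfrak{I}^*[x_i]\in [\mathfrak{J}:\mathfrak{I}^*]_i$. Applying Theorem \ref{one} to the $F$-hyperideal $[\mathfrak{J}:\mathfrak{I}^*]_i$ in the Krasner $F^{(m,n)}$-hyperring $[\mathfrak{R}:\mathfrak{I}^*]_i$ then gives that $[\mathfrak{J}:\mathfrak{I}^*]_i$ is a prime $F$-hyperideal, as desired.

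The only delicate step I anticipate is the bookkeeping in the equivalence between $\mathfrak{I}^*[y]\in [\mathfrak{J}:\mathfrak{I}^*]_i$ and $y\in \mathfrak{J}$; this relies on $\mathfrak{I}$ being normal so that the cosets partition $\mathfrak{R}$ and $\mathfrak{J}$ is a union of cosets, and this is precisely the place where the normality hypothesis on $\mathfrak{I}$ is used. Apart from that, the argument is essentially a translation between the original hyperring and its quotient, made clean by the reduction to the elementwise prime condition.
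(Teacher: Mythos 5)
Your proposal is correct and follows essentially the same route as the paper's own proof: unpack $g_{\vert I^*}$ to reduce the hypothesis to $supp(g(x_1^n))\subseteq \mathfrak{J}$, apply the elementwise primeness of $\mathfrak{J}$ to get some $x_i\in\mathfrak{J}$, and conclude $\mathfrak{I}^*[x_i]\in[\mathfrak{J}:\mathfrak{I}^*]_i$. The only difference is that you make explicit the two appeals to Theorem \ref{one} and the role of normality in identifying $\mathfrak{I}^*[y]\in[\mathfrak{J}:\mathfrak{I}^*]_i$ with $y\in\mathfrak{J}$, which the paper leaves implicit.
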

\begin{proof}
By Theorem 5.10 in \cite{asli}, $[\mathfrak{J}:\mathfrak{I}^*]_i$  is an $F$-hyperideal of $[\mathfrak{R}:\mathfrak{I}^*]_i$. Let for ${I^*}_{[x_1]}^{[x_n]} \in [\mathfrak{R}:\mathfrak{I}^*]_i$, $g_{\vert I^*}({I^*}_{[x_1]}^{[x_n]}) \subseteq [\mathfrak{J}:\mathfrak{I}^*]_i$. Then $I^*[supp(g(x_1^n))] \subseteq [\mathfrak{J}:\mathfrak{I}^*]_i$. This implies that $supp(g(x_1^n)) \subseteq \mathfrak{J}$. Since $\mathfrak{J}$ is a prime $F$-hyperideal of $\mathfrak{R}$, then there exist $1 \leq i \leq n$ such that $x_i \in \mathfrak{J}$. This means that $I^*[x_i] \in [\mathfrak{J}:\mathfrak{I}^*]_i$. Thus $[\mathfrak{J}:\mathfrak{I}^*]_i$ is a prime $F$-hyperideal of $[\mathfrak{R}:\mathfrak{I}^*]_i$.
\end{proof}
The following result investigates the stability of  prime $F$-hyperideals property under a transfer.
\begin{theorem}
Let $(\mathfrak{R}_1,f_1,g_1)$ and   $(\mathfrak{R}_2,f_2,g_2)$  be two Krasner $F^{(m,n)}$-hyperrings and $h : \mathfrak{R}_1 \longrightarrow \mathfrak{R}_1$ be a homomorphism. If $\mathfrak{P}$ is a prime $F$-hyperideal of $\mathfrak{R}_2$, then $h^{-1}(\mathfrak{P})$ is a prime $F$-hyperideal of $\mathfrak{R}_2$.
\end{theorem}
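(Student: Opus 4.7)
The plan is to reduce the statement to the element-wise characterization of primeness from Theorem~\ref{one}, and then push everything through the homomorphism using axiom (3) of the definition of homomorphism, which says $h(supp(g_1(y_1^n))) = supp(g_2(h(y_1),\dots,h(y_n)))$.

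First I would verify that $h^{-1}(\mathfrak{P})$ is actually an $F$-hyperideal of $\mathfrak{R}_1$. Non-emptiness follows because $e_{\mathfrak{R}_1}$ maps to $e_{\mathfrak{R}_2} \in \mathfrak{P}$. Support-closure under $f_1$ is obtained by picking $a_1^m \in h^{-1}(\mathfrak{P})$ and observing that by axiom (2) of homomorphism, $h(supp(f_1(a_1^m))) = supp(f_2(h(a_1),\dots,h(a_m))) \subseteq \mathfrak{P}$, so $supp(f_1(a_1^m)) \subseteq h^{-1}(\mathfrak{P})$. The absorbing property for $g_1$ is analogous, using axiom (3): for $a_1^{i-1}, a_{i+1}^n \in \mathfrak{R}_1$ and $b \in h^{-1}(\mathfrak{P})$, the image of $supp(g_1(a_1^{i-1}, b, a_{i+1}^n))$ under $h$ equals $supp(g_2(h(a_1),\dots,h(a_{i-1}),h(b),h(a_{i+1}),\dots,h(a_n)))$, which lies in $\mathfrak{P}$ since $h(b)\in \mathfrak{P}$ and $\mathfrak{P}$ is an $F$-hyperideal.

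Next, to establish primeness, I would apply the element-wise criterion of Theorem~\ref{one}. Suppose $a_1^n \in \mathfrak{R}_1$ satisfies $supp(g_1(a_1^n)) \subseteq h^{-1}(\mathfrak{P})$. Then
\[
supp(g_2(h(a_1),\dots,h(a_n))) = h(supp(g_1(a_1^n))) \subseteq h(h^{-1}(\mathfrak{P})) \subseteq \mathfrak{P}.
\]
Because $\mathfrak{P}$ is a prime $F$-hyperideal of $\mathfrak{R}_2$, Theorem~\ref{one} yields an index $i$ with $h(a_i) \in \mathfrak{P}$, equivalently $a_i \in h^{-1}(\mathfrak{P})$. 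By Theorem~\ref{one} applied to $\mathfrak{R}_1$, this is enough to conclude that $h^{-1}(\mathfrak{P})$ is prime.

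The only real obstacle is the bookkeeping in checking that $h^{-1}(\mathfrak{P})$ is an $F$-hyperideal; once that is in place the prime condition transfers immediately by axiom (3) of homomorphism together with Theorem~\ref{one}. I would also note in passing the likely typographical slips in the statement (the codomain of $h$ should be $\mathfrak{R}_2$, and the conclusion should be that $h^{-1}(\mathfrak{P})$ is a prime $F$-hyperideal of $\mathfrak{R}_1$), since otherwise the statement does not parse correctly.
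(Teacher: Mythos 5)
Your proposal is correct and follows essentially the same route as the paper: apply the element-wise criterion of Theorem~\ref{one} and transfer the condition $supp(g_1(a_1^n)) \subseteq h^{-1}(\mathfrak{P})$ through axiom (3) of the homomorphism definition to get $h(a_i) \in \mathfrak{P}$ for some $i$. You are in fact slightly more thorough than the paper, which silently omits the verification that $h^{-1}(\mathfrak{P})$ is an $F$-hyperideal, and you are right that the statement contains typographical slips (the codomain of $h$ and the ambient hyperring of $h^{-1}(\mathfrak{P})$ should both read $\mathfrak{R}_2$ and $\mathfrak{R}_1$ respectively).
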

\begin{proof}
Suppose that $(\mathfrak{R}_1,f_1,g_1)$ and   $(\mathfrak{R}_2,f_2,g_2)$  are two Krasner $F^{(m,n)}$-hyperrings and $h : \mathfrak{R}_1 \longrightarrow \mathfrak{R}_1$ is a homomorphism. Let $supp(g_1(a_1^n)) \subseteq h^{-1}(\mathfrak{P)}$ for some $a_1^n \in \mathfrak{R}_1$. Then we obtain $h(supp(g_1(a_1^n))) \subseteq \mathfrak{P}$ which implies  $supp(g_2(h(a_1), \cdots,h(a_n)) \subseteq \mathfrak{P}$. Since $\mathfrak{P}$ is a prime $F$-hyperideal of $\mathfrak{R}_2$, there exists some $1 \leq i \leq n$ such that $h(a_i) \in \mathfrak{P}$. Therefore $a_i \in h^{-1}(\mathfrak{P})$, as needed.
\end{proof}
Let $(\mathfrak{R}_1,f_1,g_1)$ and   $(\mathfrak{R}_2,f_2,g_2)$  be two Krasner $F^{(m,n)}$-hyperrings and $\mathfrak{R}_1 \times \mathfrak{R}_2=\{(a,b) \ \vert \  a \in \mathfrak{R}_1, b\in \mathfrak{R}_2 \}$. Then, by Proposition 5.6 in \cite{asli}, $(\mathfrak{R}_1 \times \mathfrak{R}_2, f_\otimes,g_\otimes)$ is a Krasner $F^{(m,n)}$-hyperring, where
\[f_\otimes((a_1,b_1),\cdots,(a_m,b_m))(a,b)=min\{f_1(a_1^m)(a),f_2(b_1^m)(b)\}\]
\[g_\otimes((a_1,b_1),\cdots,(a_n,b_n))(a,b)=min\{g_1(a_1^n)(a),g_2(b_1^n)(b)\}\]
Now, we establish the following result.
\begin{theorem}
Let $(\mathfrak{R}_1,f_1,g_1)$ and   $(\mathfrak{R}_2,f_2,g_2)$  be two Krasner $F^{(m,n)}$-hyperrings. If $\mathfrak{P}_1$ is a prime $F$-hyperideal of $\mathfrak{R}_1$, then $\mathfrak{P}_1 \times \mathfrak{R}_2$ is a prime $F$-hyperideal of $\mathfrak{R}_1 \times \mathfrak{R}_2$.
\end{theorem}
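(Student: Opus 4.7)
The plan is to invoke Theorem \ref{one}, which reduces primality from a condition on arbitrary fuzzy subsets to a condition on elements. So I need only show that for any $(a_1,b_1),\dots,(a_n,b_n)\in\mathfrak{R}_1\times\mathfrak{R}_2$ with $\mathrm{supp}\bigl(g_\otimes((a_1,b_1),\dots,(a_n,b_n))\bigr)\subseteq \mathfrak{P}_1\times\mathfrak{R}_2$, some $(a_i,b_i)$ lies in $\mathfrak{P}_1\times\mathfrak{R}_2$, i.e.\ $a_i\in\mathfrak{P}_1$.

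Before that, I would briefly verify the routine fact that $\mathfrak{P}_1\times\mathfrak{R}_2$ is an $F$-hyperideal of $\mathfrak{R}_1\times\mathfrak{R}_2$: the Krasner $F$-subhyperring axioms follow componentwise from the product $F$-hyperoperations, and the absorbing property reduces to $\mathrm{supp}(g_1(x_1^{i-1},\mathfrak{P}_1,x_{i+1}^n))\subseteq\mathfrak{P}_1$, which holds because $\mathfrak{P}_1$ is an $F$-hyperideal. I would not spell this out in detail.

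The key computational step is the identity
\[
\mathrm{supp}\bigl(g_\otimes((a_1,b_1),\dots,(a_n,b_n))\bigr)=\mathrm{supp}(g_1(a_1^n))\times\mathrm{supp}(g_2(b_1^n)).
\]
This drops straight out of the definition $g_\otimes((a_1,b_1),\dots,(a_n,b_n))(a,b)=\min\{g_1(a_1^n)(a),g_2(b_1^n)(b)\}$, since a minimum of two values in $[0,1]$ is nonzero iff both are nonzero. From the hypothesis $\mathrm{supp}(g_1(a_1^n))\times\mathrm{supp}(g_2(b_1^n))\subseteq \mathfrak{P}_1\times\mathfrak{R}_2$, and using that both supports are nonempty (they are supports of nonzero fuzzy subsets, by the definition of $F^n$-hyperoperation), I can project onto the first coordinate to conclude $\mathrm{supp}(g_1(a_1^n))\subseteq\mathfrak{P}_1$.

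Now Theorem \ref{one} applied to the prime $F$-hyperideal $\mathfrak{P}_1$ of $\mathfrak{R}_1$ yields some $1\le i\le n$ with $a_i\in\mathfrak{P}_1$, hence $(a_i,b_i)\in\mathfrak{P}_1\times\mathfrak{R}_2$, and another appeal to Theorem \ref{one} finishes the proof. I do not expect any real obstacle: the only small subtlety is remembering that supports of $F^n$-hyperoperations are nonempty, so that the product-of-supports description does force the first factor into $\mathfrak{P}_1$ rather than allowing the second factor to be empty.
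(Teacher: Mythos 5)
Your proposal is correct and follows essentially the same route as the paper: both reduce via the elementwise criterion of Theorem \ref{one}, compute the support of $g_\otimes$ from the $\min$ formula, project to the first coordinate to get $\mathrm{supp}(g_1(a_1^n))\subseteq\mathfrak{P}_1$, and apply primality of $\mathfrak{P}_1$. You are in fact slightly more careful than the paper, which silently uses the nonemptiness of $\mathrm{supp}(g_2(b_1^n))$ in the projection step and does not check that $\mathfrak{P}_1\times\mathfrak{R}_2$ is an $F$-hyperideal.
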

\begin{proof}
We presume $(a_1,b_1),\cdots,(a_n,b_n) \in \mathfrak{R}_1 \times \mathfrak{R}_2$ with 

$\hspace{1 cm}supp(g_\otimes((a_1,b_1),\cdots,(a_n,b_n))) \subseteq \mathfrak{P}_1 \times \mathfrak{R}_2$.\\ This means that 

$\{(a,b) \ \vert \ g_\otimes((a_1,b_1),\cdots,(a_n,b_n))(a,b) \neq 0\}\subseteq \mathfrak{P}_1 \times \mathfrak{R}_2$ \\and so

 $\{(a,b) \ \vert \ min\{g_1(a_1^n)(a),g_2(b_1^n)(b)\} \neq 0\}\subseteq \mathfrak{P}_1 \times \mathfrak{R}_2$. \\This implies that $supp(g_1(a_1^n)) \subseteq \mathfrak{P}_1$. Since $\mathfrak{P}_1$ is a prime $F$-hyperideal of $\mathfrak{R}_1$, we get $a_i \in \mathfrak{P}_1$ for some $1 \leq i \leq n$. Thus we have $(a_i,b_i) \in \mathfrak{P}_1 \times \mathfrak{R}_2$. Consequently, $\mathfrak{P}_1 \times \mathfrak{R}_2$ is a prime $F$-hyperideal of $\mathfrak{R}_1 \times \mathfrak{R}_2$.
\end{proof}
Let $\mathfrak{I}$ be a $F$-hyperideal of a Krasner $F^{(m, n)}$-hyperring $(\mathfrak{R},f,g)$. Then the set

$\mathfrak{R}/\mathfrak{I}=\{supp(f(a_1^{i-1},\mathfrak{I},a_{i+1}^m))\ \vert \ a_1^{i-1},a_{i+1}^n \in \mathfrak{R}\}$\\
endowed with $F^m$-hyperoperation $f$ which for all $a_{11}^{1m},\dots,a_{m1}^{mm} \in \mathfrak{R}$

$f(supp(f(a_{11}^{1(i-1)},\mathfrak{I},a_{1(i+1)}^{1m})),\dots,supp(f(a_{m1}^{m(i-1)},\mathfrak{I},a_{m(i+1)}^{mm})))$

$\hspace{0.1cm}=supp(f(supp(f(a_{11}^{m1})),\dots,supp(f(a_{1(i-1)}^{m(i-1)})),\mathfrak{I},supp(f(a_{1(i+1)}^{m(i+1)})),\dots,supp(f(a_{1m}^{mm})))$\\
and with $F^n$-operation $g$ which for all $a_{11}^{1m},\dots,a_{n1}^{nm} \in \mathfrak{R}$

$g(supp(f(a_{11}^{1(i-1)},\mathfrak{I},a_{1(i+1)}^{1m})),\dots,supp(f(a_{n1}^{n(i-1)},\mathfrak{I},a_{n(i+1)}^{nm})))$

$\hspace{0.1cm}=supp(f(supp(g(a_{11}^{n1})),\dots,supp(g(a_{1(i-1)}^{n(i-1)})),\mathfrak{I},supp(g(a_{1(i+1)}^{n(i+1)})),\dots,supp(g(a_{1m}^{nm})))$\\
construct a Krasner $F^{(m,n)}$-hyperring, and $(\mathfrak{R}/\mathfrak{I},f,g)$ is called the quotient Krasner $F^{(m, n)}$-hyperring of $R$ by $\mathfrak{I}$.
\begin{theorem}
Let $\mathfrak{I}$ be an $F$-hyperideal of  a Krasner $F^{(m, n)}$-hyperring $(\mathfrak{R},f,g)$. Then the natural  map $\pi: \mathfrak{R} \longrightarrow \mathfrak{R}/\mathfrak{I}$, by $\pi(a)=supp(f(a,\mathfrak{I},e^{(m-2)}))$  is an epimorphism. 
\end{theorem}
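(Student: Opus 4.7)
The plan is to verify the three axioms of a homomorphism stated in the preceding definition and then check surjectivity.

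First, I would observe that $\pi$ is well-defined into $\mathfrak{R}/\mathfrak{I}$: for each $a \in \mathfrak{R}$, the value $\pi(a)=supp(f(a,\mathfrak{I},e^{(m-2)}))$ has precisely the shape of a generic element of $\mathfrak{R}/\mathfrak{I}$ (take $a_1=a$, index $i=2$, and $a_3=\cdots=a_m=e$). For identity preservation, the $F$-identity axiom of a canonical $F^m$-hypergroup gives $supp(f(c,e^{(m-1)}))=\{c\}$ for every $c$, hence $\pi(e_{\mathfrak{R}})=supp(f(e,\mathfrak{I},e^{(m-2)}))=\mathfrak{I}$, and $\mathfrak{I}$ plays the role of the $F$-identity coset in $\mathfrak{R}/\mathfrak{I}$.

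For the $f$-compatibility condition, I would unfold both sides. On the one hand, $\pi(supp(f(x_1^m)))=\{supp(f(y,\mathfrak{I},e^{(m-2)})) : y\in supp(f(x_1^m))\}$. On the other hand, the formula for $f$ on $\mathfrak{R}/\mathfrak{I}$ applied to $\pi(x_1),\dots,\pi(x_m)$, after using the identity axiom to simplify each $supp(f(x_j,e^{(m-1)}))=\{x_j\}$, collapses by associativity of $f$ to the same family of cosets. The $g$-compatibility condition is handled analogously, with the distributivity axiom (iii) of Krasner $F^{(m,n)}$-hyperrings replacing associativity of $f$, together with the fact that $\mathfrak{I}$ absorbs every $g$-product involving it.

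For surjectivity, a generic element of $\mathfrak{R}/\mathfrak{I}$ has the form $\xi=supp(f(a_1^{i-1},\mathfrak{I},a_{i+1}^m))$. Using the commutativity axiom (3) of a canonical $F^m$-hypergroup, I would permute $\mathfrak{I}$ into the second slot; then associativity together with the $F$-identity property allows me to fold the remaining entries $a_1,a_3,\dots,a_m$ (padded with a copy of $e$) into a single argument. Picking any $b\in supp(f(a_1,e,a_3,\dots,a_m))$, this produces $\xi=supp(f(b,\mathfrak{I},e^{(m-2)}))=\pi(b)$.

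The main obstacle I expect is the folding step in surjectivity: justifying carefully that $\bigcup_{c\in\mathfrak{I}}supp(f(a_1,c,a_3,\dots,a_m))=\bigcup_{c\in\mathfrak{I}}supp(f(b,c,e^{(m-2)}))$ for the chosen $b$, which rests on associativity of $f$ and on $\mathfrak{I}$ being support-closed under $f$. The remaining verifications reduce to routine unfoldings of the definitions of the quotient hyperoperations.
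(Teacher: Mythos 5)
Your proposal is correct and follows essentially the same route as the paper: identity preservation via the $F$-identity/coset lemma, and the $f$- and $g$-compatibility conditions by unfolding the quotient hyperoperations with associativity and distributivity. The only difference is that you spell out surjectivity (permuting $\mathfrak{I}$ into a fixed slot and folding the remaining arguments), a step the paper dismisses as clear; the coset-independence fact you flag there is indeed the standard property of $F$-hyperideal cosets and is available from the cited lemmas.
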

\begin{proof}
Let $\mathfrak{I}$ be an $F$-hyperideal of  a Krasner $F^{(m, n)}$-hyperring $(\mathfrak{R},f,g)$. It is clear that $\pi$ is a projection map. We have to show that $\pi$ is a homomorphism. $\pi(e_R)=supp(f(e_R,\mathfrak{I},e_R^{(m-2)}))=\mathfrak{I}=e_{R/\mathfrak{I}}$, by Lemma 3.14 in \cite{asli}. For all $a_1^m \in \mathfrak{R}$,

$\pi(supp(f(a_1^m))=supp(f(supp(f(a_1^m)),\mathfrak{I},\{e\}^{(m-2)})$

$\hspace{2.4cm}=supp(f(supp(f(a_1^m)),\mathfrak{I},(supp(f(e^m))^{(m-2)}))$

$\hspace{2.4cm}=supp(f(supp(f(a_1,\mathfrak{I},e^{(m-2)})),\dots,supp(f(a_m,\mathfrak{I},e^{(m-2)}))$

$\hspace{2.4cm}=supp(f(\pi(a_1),\dots,\pi(a_m))$.\\
Furthermore, for all $b_1^n \in \mathfrak{R}$ we have

$\pi(supp(g(b_1^n))=supp(f(supp(g(b_1^n)),\mathfrak{I},\{e\}^{(m-2)})$

$\hspace{2.4cm}=supp(f(supp(g(b_1^n)),\mathfrak{I},(supp(g(e^n))^{(m-2)}))$

$\hspace{2.4cm}=supp(g(supp(f(b_1,\mathfrak{I},e^{(m-2)})),\dots,supp(f(b_n,\mathfrak{I},e^{(m-2)})))$

$\hspace{2.4cm}=supp(g(\pi(b_1),\dots,\pi(b_n))$.\\
Hence, $\pi$ is a homomorphism. 
\end{proof}
\begin{definition}
A Krasner $F^{(m,n)}$-hyperring $(\mathfrak{R},f,g)$ is called a hyperintegral $F$-domain, if for all $a_1^n \in \mathfrak{R}$, $supp(g(a_1^n))=\{e\}$ implies that $a_i=e$ for some $1 \leq i \leq n.$
\end{definition} 
The next theorem characterizes prime $F$-hyperideals in the sense of  quotient Krasner $F^{(m, n)}$-hyperrings .
\begin{theorem}
Let $\mathfrak{P}$ be an $F$-hyperideal of a Krasner $F^{(m,n)}$-hyperring $(\mathfrak{R},f,g)$. Then $\mathfrak{P}$ is prime if and only if $\mathfrak{R}/\mathfrak{P}$ is a hyperintegral $F$-domain.
\end{theorem}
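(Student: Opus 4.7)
The plan is to reduce the statement to the elementwise characterization of prime $F$-hyperideals already proved in Theorem \ref{one}, and then translate between membership in $\mathfrak{P}$ and equality to the zero class in $\mathfrak{R}/\mathfrak{P}$ using the quotient construction. The bridge is the identification $\pi(a) = \pi(e)$ if and only if $a \in \mathfrak{P}$, together with the formula $\pi(supp(g(a_1^n))) = supp(g(\pi(a_1),\dots,\pi(a_n)))$ supplied by the preceding theorem (which shows $\pi$ is an epimorphism).

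For the forward implication, I would start with elements $\pi(a_1),\dots,\pi(a_n) \in \mathfrak{R}/\mathfrak{P}$ satisfying $supp(g(\pi(a_1),\dots,\pi(a_n))) = \{e_{\mathfrak{R}/\mathfrak{P}}\}$, where $e_{\mathfrak{R}/\mathfrak{P}} = \mathfrak{P}$. Using the homomorphism property of $\pi$ established above, this is equivalent to $\pi(supp(g(a_1^n))) = \{\mathfrak{P}\}$, i.e., every element of $supp(g(a_1^n))$ lies in $\mathfrak{P}$. Hence $supp(g(a_1^n)) \subseteq \mathfrak{P}$, and primeness of $\mathfrak{P}$ together with Theorem \ref{one} yields some index $i$ with $a_i \in \mathfrak{P}$, which means exactly $\pi(a_i) = e_{\mathfrak{R}/\mathfrak{P}}$. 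This is the hyperintegral $F$-domain condition on $\mathfrak{R}/\mathfrak{P}$.

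For the reverse implication, I would apply Theorem \ref{one} in the other direction. Suppose $\mathfrak{R}/\mathfrak{P}$ is a hyperintegral $F$-domain, and let $a_1^n \in \mathfrak{R}$ with $supp(g(a_1^n)) \subseteq \mathfrak{P}$. Pushing forward along $\pi$ and again invoking the homomorphism identity, $supp(g(\pi(a_1),\dots,\pi(a_n))) = \pi(supp(g(a_1^n))) \subseteq \pi(\mathfrak{P}) = \{\mathfrak{P}\} = \{e_{\mathfrak{R}/\mathfrak{P}}\}$. The domain assumption then forces $\pi(a_i) = e_{\mathfrak{R}/\mathfrak{P}}$ for some $i$, which translates back to $a_i \in \mathfrak{P}$. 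A second application of Theorem \ref{one} concludes that $\mathfrak{P}$ is prime.

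The main obstacle is bookkeeping inside the quotient: one must be careful that the $F$-identity of $\mathfrak{R}/\mathfrak{P}$ really is the class $\mathfrak{P}$ itself, and that the description $\pi(a) = supp(f(a,\mathfrak{P},e^{(m-2)}))$ together with the canonicity of $(\mathfrak{R},f)$ does give the equivalence $\pi(a) = e_{\mathfrak{R}/\mathfrak{P}} \iff a \in \mathfrak{P}$. Both facts are either stated in \cite{asli} (Lemma 3.14) or follow immediately from absorption of $\mathfrak{P}$ under $f$; once they are in place, the rest is a clean transport of the characterization in Theorem \ref{one} across the epimorphism $\pi$.
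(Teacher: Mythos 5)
Your argument is correct, and it reaches the same translation ($a\in\mathfrak{P}$ if and only if $\pi(a)$ is the zero class) that drives the paper's proof, but it is organized differently: you treat the epimorphism theorem and Theorem \ref{one} as black boxes and transport the elementwise primeness criterion across $\pi$, whereas the paper never invokes either result in the forward direction. Instead, the paper works with arbitrary class representatives $supp(f(a_{j1}^{j(i-1)},\mathfrak{P},a_{j(i+1)}^{jm}))$, unwinds the definition of $g$ on the quotient by hand, and applies the fuzzy-subset definition of primeness directly to the characteristic functions $\chi_{supp(f(a_{j1}^{j(i-1)},e,a_{j(i+1)}^{jm}))}$ of the classes, concluding with Lemma 3.14 of \cite{asli} that the offending class equals $\mathfrak{P}$. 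The one point your route leans on that the paper's does not is surjectivity of $\pi$: elements of $\mathfrak{R}/\mathfrak{P}$ are a priori of the form $supp(f(a_1^{i-1},\mathfrak{P},a_{i+1}^m))$ with $m-1$ free parameters, so to reduce the hyperintegral-domain condition to classes of the form $\pi(a)$ you need every such class to be a $\pi(a)$; this is exactly the epimorphism claim of the preceding theorem, so it is available, but it deserves an explicit citation rather than being absorbed into ``bookkeeping.'' In exchange, your version avoids the paper's heavy index manipulation and makes the backward direction (which in the paper is already essentially a push-forward along $\pi$) symmetric with the forward one, so on balance it is the cleaner write-up.
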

\begin{proof}
$\Longrightarrow$ Let $\mathfrak{P}$ be a $F$-hyperideal of $\mathfrak{R}$ and for all $a_{11}^{1m},\dots,a_{n1}^{nm}$, 

$supp(f(a_{11}^{1(i-1)},\mathfrak{P}, a_{1(i+1)}^{1m}), \dots, supp(f(a_{n1}^{n(i-1)},\mathfrak{P}, a_{n(i+1)}^{nm}) \in \mathfrak{R}/\mathfrak{P}$\\
such that

$g(supp(f(a_{11}^{1(i-1)},\mathfrak{I},a_{1(i+1)}^{1m})),\dots,supp(f(a_{n1}^{n(i-1)},\mathfrak{I},a_{n(i+1)}^{nm})))=\mathfrak{P}=\{e_{\mathfrak{R}/\mathfrak{I}}\}.$
\\Then we get

$supp(f(supp(g(a_{11}^{n1})),\dots,supp(g(a_{1(i-1)}^{n(i-1)})),\mathfrak{I},supp(g(a_{1(i+1)}^{n(i+1)})),\dots,supp(g(a_{1m}^{nm})))=\mathfrak{P}$\\
and so

$supp(f(supp(g(a_{11}^{n1})),\dots,supp(g(a_{1(i-1)}^{n(i-1)})),e,supp(g(a_{1(i+1)}^{n(i+1)})),\dots,supp(g(a_{1m}^{nm}))) \subseteq \mathfrak{P}.$\\
This means that

$g(supp(f(a_{11}^{1(i-1)},e,a_{1(i+1)}^{1m})),\dots,supp(f(a_{n1}^{n(i-1)},e,a_{n(i+1)}^{nm}))) \subseteq \mathfrak{P}$\\ which imples 

$g(\chi_{supp(f(a_{11}^{1(i-1)},e,a_{1(i+1)}^{1m}))},\dots,\chi_{supp(f(a_{n1}^{n(i-1)},e,a_{n(i+1)}^{nm}))}) \subseteq \mathfrak{P}$. \\Since $\mathfrak{P}$ is a prime $F$-hyperideal of $\mathfrak{R}$, we obtain $supp(\chi_{supp(f(a_{j1}^{j(i-1)},e,a_{j(i+1)}^{jm}))}) \subseteq \mathfrak{P}$ for some $1 \leq j \leq n$. Therefore $supp(f(a_{j1}^{j(i-1)},e,a_{j(i+1)}^{jm})) \subseteq \mathfrak{P}$. Then we get $supp(f(a_{j1}^{j(i-1)},\mathfrak{P},a_{j(i+1)}^{jm})) = \mathfrak{P}$, by Lemma 3.14 (1) in \cite{asli}. Consequently, $\mathfrak{R}/\mathfrak{P}$ is a hyperintegral $F$-domain.\\
$\Longleftarrow$ Let $\mathfrak{R}/\mathfrak{P}$ be a hyperintegral $F$-domain. Suppose that $supp(g(a_1^n)) \subseteq \mathfrak{P}$ for all $a_1^n \in \mathfrak{R}$. Then we have $supp(f(supp(g(a_1^n)),\mathfrak{P},e^{(m-2)}))=\mathfrak{P}$ by Lemma 3.14 (1) in \cite{asli}. Therefore 

$supp(f(supp(g(a_1^n)),\mathfrak{P},(supp(g(e^n))^{(m-2)}))=\mathfrak{P}$. \\Then by the definition of the quotient Krasner $F^{(m, n)}$-hyperring we have

 $g(supp(f(a_1,\mathfrak{P},e^{(m-2)}),\dots,supp(f(a_n,\mathfrak{P},e^{(m-2)})))=\mathfrak{P}=\{e_{\mathfrak{R}/\mathfrak{I}}\}$.\\ Since $\mathfrak{R}/\mathfrak{P}$ is a hyperintegral $F$-domain, then we get $supp(f(a_i,\mathfrak{P},e^{(m-2)}))=\mathfrak{P}$ for some $1 \leq i \leq n$ which implies $a_i \in \mathfrak{P}$. consequently,  $\mathfrak{P}$ is a prime $F$-hyperideal of $\mathfrak{R}$.
\end{proof}
\begin{theorem}
Let $(\mathfrak{R},f,g)$ be a Krasner $F^{(m,n)}$-hyperring with the scalar identity $e^\prime$. Then for all $x \in \mathfrak{R}$, the set $\{a \in supp(g(r,x,e^{\prime^{(n-2)}}))\ \vert \ r \in \mathfrak{R}\}$ is an $F$-hyperideal of $\mathfrak{R}$. We say that the $F$-hyperideal is the hyperideal generated by $x$ and it is denoted by $<x>_F$.
\end{theorem}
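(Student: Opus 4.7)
The plan is to denote $S := \bigcup_{r \in \mathfrak{R}} \mathrm{supp}(g(r,x,e'^{(n-2)}))$ and to verify in turn that $S$ is support-closed under $f$ and $g$, that $(S,f,g)$ inherits the canonical $F^m$-hypergroup and $F^n$-semigroup structure, and that $S$ satisfies the absorption property required of an $F$-hyperideal. The key idea is that having the scalar $F$-identity $e'$ turns $S$ into a principal object: every element of $S$ is of the shape ``some scalar times $x$'' in the multiplicative slot, so algebraic manipulations can be concentrated entirely on the first argument of $g$.

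First I would verify closure under $f$. Given $a_1,\dots,a_m \in S$, choose $r_1,\dots,r_m \in \mathfrak{R}$ with $a_j \in \mathrm{supp}(g(r_j,x,e'^{(n-2)}))$. The distributive axiom (iii), applied in the first slot of $g$, rewrites
\[
 f\bigl(g(r_1,x,e'^{(n-2)}),\dots,g(r_m,x,e'^{(n-2)})\bigr) \;=\; g\bigl(f(r_1^m),x,e'^{(n-2)}\bigr).
\]
Taking supports and using that the support of $f(\mu_1^m)$ is the union of supports over base points, one sees $\mathrm{supp}(f(a_1^m))$ is contained in $\bigcup_{r \in \mathrm{supp}(f(r_1^m))} \mathrm{supp}(g(r,x,e'^{(n-2)})) \subseteq S$.

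Next I would handle closure under $g$ and the absorption property simultaneously. For $a_1,\dots,a_{i-1},a_{i+1},\dots,a_n \in \mathfrak{R}$ and $a \in S$ with $a \in \mathrm{supp}(g(r,x,e'^{(n-2)}))$, commutativity lets me move $a$ into the first slot and then associativity of $g$ with the scalar identity gives
\[
 g(a_1^{i-1},a,a_{i+1}^n) \subseteq \mathrm{supp}\bigl(g\bigl(g(r,a_1^{i-1},a_{i+1}^n,e'^{(n-1-\ldots)}),\,x,\,e'^{(n-2)}\bigr)\bigr),
\]
where the inner $g$ is evaluated so that the $(n-2)$ slots filled by $e'$ absorb the extra arguments via the scalar identity (here I am using $\mathrm{supp}(g(a,e'^{(n-1)})) = \{a\}$ to insert/remove $e'$'s freely). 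This expresses the result as an element of $\mathrm{supp}(g(r',x,e'^{(n-2)}))$ for some $r' \in \mathfrak{R}$, so it lies in $S$.

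Finally, for the canonical $F^m$-hypergroup structure on $S$: the $F$-identity $e$ of $f$ belongs to $S$ because axiom (iv) gives $\mathrm{supp}(g(e,x,e'^{(n-2)})) = \{e\}$; and for inverses, if $a \in \mathrm{supp}(g(r,x,e'^{(n-2)}))$ then the usual derivation in Krasner hyperrings (that additive inverses pass through $g$) shows $a^{-1} \in \mathrm{supp}(g(r^{-1},x,e'^{(n-2)})) \subseteq S$. The associativity and distributivity laws for $f$ and $g$ on $S$ are inherited from $\mathfrak{R}$. I expect the main obstacle to be the bookkeeping in the second step: correctly re-indexing arguments of the $n$-ary $g$ when moving elements through its slots, in particular the repeated insertion and deletion of copies of the scalar identity $e'$ to keep the arity at exactly $n$. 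Once this indexing is handled cleanly, each verification reduces to a direct application of the axioms.
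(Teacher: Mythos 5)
Your proposal is correct and follows essentially the same route as the paper: closure under $f$ via the distributive law pulling $f$ into the first slot of $g$, closure under $g$ and the absorption property via associativity, commutativity and the scalar $F$-identity (rewriting $g(a_1^{i-1},g(r,x,e'^{(n-2)}),a_{i+1}^n)$ as $g(g(a_1^{i-1},r,a_{i+1}^n),x,e'^{(n-2)})$), with the remaining subhyperring axioms inherited from $\mathfrak{R}$. The only cosmetic point is that the inner $g(r,a_1^{i-1},a_{i+1}^n)$ in your second step already has exactly $n$ arguments, so no padding by $e'$ is needed there; otherwise the bookkeeping works out exactly as you anticipate.
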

\begin{proof}
We first show $<x>_F$ is support closed under $f$ and  $g$. Let $a_1^m \in <x>_F$. Then for each $1 \leq i \leq m$ there exist $r_i \in \mathfrak{R}$ such that $a_i \in supp(g(r_i,x,e^{\prime^{(n-2)}}))$. By Proposition 3.2 in \cite{asli},  we get

$supp(f(a_1^m))=supp(f(g(r_1,x,e^{\prime^{(n-2)}}),\dots,g(r_m,x,e^{\prime^{(n-2)}})))$

$\hspace{2cm}=supp(g(f(r_1,\dots,r_m),x,e^{\prime^{(n-2)}})$

$\hspace{2cm}=\bigcup _{r \in supp(f(r_1,\dots,r_m))}supp(g(r,x,e^{\prime^{(n-2)}}))$.\\
Since $r \in supp(f(r_1,\dots,r_m)) \subseteq \mathfrak{R}$, then $supp(f(a_1^m)) \subseteq <x>_F$. Let $b_1^n \in <x>_F$. Then for each $1 \leq i \leq n$ there exist $r_i \in \mathfrak{R}$ such that $b_i \in supp(g(r_i,x,e^{\prime^{(n-2)}}))$. Therefore  we have

$supp(g(b_1^n))=supp(g(g(r_1,x,e^{\prime^{(n-2)}}),\dots,g(r_n,x,e^{\prime^{(n-2)}})))$

$\hspace{2cm}=supp(g(g(r_1,\dots,r_n),x,e^{\prime^{(n-2)}}))$

$\hspace{2cm}=supp(g(r,x,e^{\prime^{(n-2)}}))$\\
such that $r \in supp(g(r_1,\dots,r_n)) \subseteq \mathfrak{R}$. Hence $supp(g(b_1^n)) \subseteq <x>_F$. It is easy to see that $(<x>_F,f,g)$ is a $F$-subhyperring of $\mathfrak{R}$. Now we show that  for all $r_1^n \in  \mathfrak{R}$ and $1 \leq i \leq n$, $supp(g(r_1^{i-1},<x>,r_{i+1}^n)) \subseteq <x>_F$. Let $r_1^n \in \mathfrak{R}$. Then

$supp(g(r_1^{i-1},<x>_F,r_{i+1}^n))=supp(g(r_1^{i-1},\chi_{<x>_F},r_{i+1}^n))$

$\hspace{4.1cm}=\bigcup_{a \in supp(\chi_{<x>_F})}supp(g(r_1^{i-1},a,r_{i+1}^n))$

$\hspace{4.1cm}=\bigcup_{a \in <x>_F}supp(g(r_1^{i-1},a,r_{i+1}^n))$

$\hspace{4.1cm} \subseteq \bigcup_{r \in \mathfrak{R}}supp(g(r_1^{i-1},g(r,x,e^{\prime^{(n-2)}}),r_{i+1}^n))$

$\hspace{4.1cm} = \bigcup_{r \in \mathfrak{R}}supp(g(g(r_1^{i-1},r,r_{i+1}^n),x,e^{\prime^{(n-2)}}))$

$\hspace{4.1cm} = \bigcup_{r \in \mathfrak{R}}\bigcup_{r^\prime \in supp(g(r_1^{i-1},r,r_{i+1}^n))}supp(g(r^\prime,x,e^{\prime^{(n-2)}}))$

$\hspace{4.1cm} \subseteq <x>_F$.\\
Thus $F$-subhyperring  $<x>_F=\{a \in supp(g(r,x,e^{\prime^{(n-2)}}))\ \vert \ r \in \mathfrak{R}\}$ of $\mathfrak{R}$ is an $F$-hyperideal of $\mathfrak{R}$.
\end{proof}
\begin{definition}
An $F$-hyperideal $\mathfrak{M}$ of a Krasner $F^{(m,n)}$-hyperring is called maximal if for every $F$-hyperideal $\mathfrak{N}$ of $\mathfrak{R}$, $\mathfrak{M} \subseteq \mathfrak{N} \subseteq \mathfrak{R}$ implies that $\mathfrak{N}=\mathfrak{M}$ or $\mathfrak{N}=\mathfrak{R}$.
\end{definition}
The intersection of all maximal $F$-hyperideals of $\mathfrak{R}$ is called Jacobson radical of $\mathfrak{R}$ and it is denoted by $J(\mathfrak{R})$. If $\mathfrak{R}$ does not have any maximal $F$-hyperideal, we let $J(\mathfrak{R})=\mathfrak{R}$.
\begin{example}
Let us consider the Krasner $F^{(m,2)}$-hypering $(G,f,g)$, given in Example 3.4 in \cite{asli}. Then $\{e\}$ is a maximal $F$-hyperideal of $G$.
\end{example}
The following theorem ensure that there is always a sufficient supply of the maximal $F$-hyperideals.
\begin{theorem}
Every  Krasner $F^{(m,2)}$-hypering $\mathfrak{R}$
with scalar $F$-identity $e^\prime$, has at least one maximal $F$-hyperideal.
\end{theorem}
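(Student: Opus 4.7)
The plan is to apply Zorn's lemma to the collection of proper $F$-hyperideals of $\mathfrak{R}$ ordered by inclusion. First I would let $\mathcal{F}$ denote the family of all $F$-hyperideals $\mathfrak{I}$ of $\mathfrak{R}$ with $\mathfrak{I} \neq \mathfrak{R}$, partially ordered by $\subseteq$. If $\mathfrak{R} = \{e\}$ there is nothing to prove, so assume $\mathfrak{R}$ is non-trivial; then the $F$-hyperideal $\{e\}$ (which is easily checked to be support-closed under $f$ and $g$ using condition (iv) in the definition of a Krasner $F^{(m,n)}$-hyperring) belongs to $\mathcal{F}$, so $\mathcal{F}$ is non-empty.

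Next I would verify the chain hypothesis of Zorn's lemma. Let $\{\mathfrak{I}_\alpha\}_{\alpha \in \Lambda}$ be a chain in $\mathcal{F}$ and set $\mathfrak{U}=\bigcup_{\alpha \in \Lambda}\mathfrak{I}_\alpha$. Since any finite collection of elements of $\mathfrak{U}$ lies in a single $\mathfrak{I}_\alpha$ by the chain property, for $a_1^m \in \mathfrak{U}$ we have $supp(f(a_1^m)) \subseteq \mathfrak{I}_\alpha \subseteq \mathfrak{U}$ and for $b_1,b_2 \in \mathfrak{U}$ we have $supp(g(b_1,b_2)) \subseteq \mathfrak{I}_\alpha \subseteq \mathfrak{U}$. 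The absorption property is similar: for $r \in \mathfrak{R}$ and $1 \leq i \leq 2$, each element of $\chi_{\mathfrak{U}}$'s support sits in some $\mathfrak{I}_\alpha$, and $supp(g(r^{i-1}_1,\mathfrak{I}_\alpha,r^2_{i+1})) \subseteq \mathfrak{I}_\alpha \subseteq \mathfrak{U}$. Hence $\mathfrak{U}$ is an $F$-hyperideal.

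The crucial properness check is where the scalar $F$-identity hypothesis enters. I would argue that $e^\prime \notin \mathfrak{U}$: if on the contrary $e^\prime \in \mathfrak{I}_\alpha$ for some $\alpha$, then for every $r \in \mathfrak{R}$ the scalar identity gives $\{r\}=supp(g(r,e^\prime)) \subseteq supp(g(r,\mathfrak{I}_\alpha)) \subseteq \mathfrak{I}_\alpha$, forcing $\mathfrak{I}_\alpha=\mathfrak{R}$, which contradicts $\mathfrak{I}_\alpha \in \mathcal{F}$. Consequently $\mathfrak{U} \neq \mathfrak{R}$ and $\mathfrak{U} \in \mathcal{F}$ is an upper bound for the chain.

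Zorn's lemma now produces a maximal element $\mathfrak{M}$ of $\mathcal{F}$. By construction any $F$-hyperideal $\mathfrak{N}$ with $\mathfrak{M} \subseteq \mathfrak{N} \subseteq \mathfrak{R}$ is either in $\mathcal{F}$ (forcing $\mathfrak{N}=\mathfrak{M}$) or equal to $\mathfrak{R}$, so $\mathfrak{M}$ is a maximal $F$-hyperideal of $\mathfrak{R}$. The main obstacle, and the only place the scalar $F$-identity is used, is the properness of $\mathfrak{U}$; every other step is routine transfer of standard Zorn-type arguments to the hyperring setting, but without $e^\prime$ one cannot rule out that unions of proper $F$-hyperideals might exhaust $\mathfrak{R}$.
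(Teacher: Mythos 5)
Your proof is correct and takes exactly the route the paper intends: the paper's own ``proof'' is a one-line deferral to the classical Zorn's-lemma argument for maximal ideals of rings, and your write-up is precisely that argument transferred to the $F$-hyperideal setting, with the scalar $F$-identity $e^\prime$ correctly identified as the ingredient that keeps the union of a chain of proper $F$-hyperideals proper. The only (harmless) loose end is the trivial case $\mathfrak{R}=\{e\}$, where the existence claim is vacuous or degenerate depending on whether one insists a maximal $F$-hyperideal be proper.
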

\begin{proof}
The proof is the same as the proof in the classical context of maximal ideals of rings.
\end{proof}
We say that an element $x \in \mathfrak{R}$ is $F$-invertible if there exists $y \in \mathfrak{R}$ such that $supp(g(x,y,e^{\prime^{(n-2)}}))=\{e^\prime\}$. Also,
the subset $U$ of $\mathfrak{R}$ is $F$-invertible if and only if every element of $U$ is $F$-invertible.
\begin{theorem}
Let $\mathfrak{I}$ be an $F$-hyperideal of a Krasner $F^{(m,n)}$-hyperring with the scalar $F$-identity $e^\prime$. If every $a \in \mathfrak{R}-\mathfrak{M}$ is $F$-invertible in $\mathfrak{R}$, then $\mathfrak{M}$ is the only maximal
hyperideal of $\mathfrak{R}$.
\end{theorem}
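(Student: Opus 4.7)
The argument mirrors the standard characterization of local rings from classical ring theory, transferred to the $F$-hyperstructure setting. The plan is to first show that $\mathfrak{M}$ is itself maximal, and then that any other maximal $F$-hyperideal of $\mathfrak{R}$ must coincide with $\mathfrak{M}$.

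For the maximality of $\mathfrak{M}$, suppose $\mathfrak{N}$ is an $F$-hyperideal with $\mathfrak{M} \subsetneq \mathfrak{N} \subseteq \mathfrak{R}$, and pick $a \in \mathfrak{N} \setminus \mathfrak{M}$. By the hypothesis, $a$ is $F$-invertible, so there exists $y \in \mathfrak{R}$ with $\mathrm{supp}(g(a,y,e^{\prime^{(n-2)}})) = \{e^\prime\}$. Since $a \in \mathfrak{N}$ and $\mathfrak{N}$ is an $F$-hyperideal, absorption yields $\mathrm{supp}(g(a,y,e^{\prime^{(n-2)}})) \subseteq \mathfrak{N}$, and hence $e^\prime \in \mathfrak{N}$. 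A second application of absorption, combined with the scalar identity property $\mathrm{supp}(g(r,e^{\prime^{(n-1)}})) = \{r\}$, gives $r \in \mathfrak{N}$ for every $r \in \mathfrak{R}$; hence $\mathfrak{N} = \mathfrak{R}$, and $\mathfrak{M}$ is maximal.

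For uniqueness, let $\mathfrak{M}'$ be any maximal $F$-hyperideal of $\mathfrak{R}$. If $\mathfrak{M}' \not\subseteq \mathfrak{M}$, pick $a \in \mathfrak{M}' \setminus \mathfrak{M}$; the invertibility-plus-absorption argument above, applied inside $\mathfrak{M}'$, forces $\mathfrak{M}' = \mathfrak{R}$, contradicting the properness implicit in maximality. Hence $\mathfrak{M}' \subseteq \mathfrak{M}$, and since $\mathfrak{M} \neq \mathfrak{R}$, the maximality of $\mathfrak{M}'$ then gives $\mathfrak{M}' = \mathfrak{M}$.

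No genuine obstacle is expected: the $F$-hyperideal absorption property, the scalar $F$-identity $e^\prime$, and the $F$-invertibility hypothesis together carry the argument. The only small subtlety is to recognize that the single-element absorption $\mathrm{supp}(g(a,y,e^{\prime^{(n-2)}})) \subseteq \mathfrak{N}$ is a direct consequence of the $F$-hyperideal condition with $a \in \mathfrak{N}$ placed in the relevant argument slot of $g$ (via the convention identifying an element with its characteristic fuzzy subset).
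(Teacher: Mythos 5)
Your proof is correct and is precisely the standard local-ring argument the author evidently has in mind: the paper itself gives no details here (its proof reads only ``Straight forward''), and your two steps --- an invertible element of an $F$-hyperideal $\mathfrak{N}$ forces $e'\in\mathfrak{N}$ by absorption and commutativity, and then $\{r\}=supp(g(r,e'^{(n-1)}))\subseteq\mathfrak{N}$ forces $\mathfrak{N}=\mathfrak{R}$ --- supply exactly the missing content. The only point worth flagging is that you (reasonably) read ``maximal'' as implying properness and tacitly assume $\mathfrak{M}\neq\mathfrak{R}$; both are needed, since the paper's literal definition of a maximal $F$-hyperideal does not exclude $\mathfrak{R}$ itself.
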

\begin{proof}
Straight forward
\end{proof}
The Jacobson radical of a Krasner $F^{(m,n)}$-hyperring $\mathfrak{R}$ can be characterized as follows:
\begin{theorem} \label{har}
Let $\mathfrak{I}$ be an $F$-hyperideal of a Krasner $F^{(m,n)}$-hyperring with the scalar $F$-identity $e^\prime$. Then every element of $supp(f(e^\prime,\mathfrak{I},e^{(m-2)}))$ is $F$-invertible if and only if $\mathfrak{I} \subseteq J(\mathfrak{R})$.
\end{theorem}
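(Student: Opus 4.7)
The plan is to adapt the classical ring-theoretic characterization that $x \in J(R)$ iff every element of $1 + xR$ is a unit, replacing sums by applications of $f$, products by $g$, and additive inverses by the reversibility axiom~(2) of canonical $F^m$-hypergroups. In both directions I will argue by contradiction, using maximal $F$-hyperideals (whose existence is guaranteed by the preceding theorem) as the bridge between $F$-invertibility and membership in the Jacobson radical.

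For the direction $(\Leftarrow)$, assuming $\mathfrak{I} \subseteq J(\mathfrak{R})$, I would pick $x \in supp(f(e^\prime,\mathfrak{I},e^{(m-2)}))$ and suppose for contradiction that $x$ is not $F$-invertible. Then $e^\prime \notin <x>_F$ (otherwise $e^\prime \in supp(g(r,x,e^{\prime^{(n-2)}}))$ for some $r$, exhibiting an $F$-inverse of $x$), so $<x>_F$ is a proper $F$-hyperideal; a Zorn's lemma argument then places it inside some maximal $F$-hyperideal $\mathfrak{M}$, giving $x \in \mathfrak{M}$. I would then choose $a \in \mathfrak{I}$ with $x \in supp(f(e^\prime,a,e^{(m-2)}))$ and apply axiom~(2) at the first coordinate (using $e^{-1}=e$) to obtain $e^\prime \in supp(f(x,a^{-1},e^{(m-2)}))$. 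Since $a \in \mathfrak{I} \subseteq J(\mathfrak{R}) \subseteq \mathfrak{M}$ and $\mathfrak{M}$ is closed under inverses, $a^{-1} \in \mathfrak{M}$; support-closure of $\mathfrak{M}$ under $f$ then forces $e^\prime \in \mathfrak{M}$, and scalar-identity absorption yields $\mathfrak{M} = \mathfrak{R}$, a contradiction.

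For the direction $(\Rightarrow)$, assuming every element of $supp(f(e^\prime,\mathfrak{I},e^{(m-2)}))$ is $F$-invertible and $\mathfrak{I} \not\subseteq J(\mathfrak{R})$, I would fix a maximal $F$-hyperideal $\mathfrak{M}$ with $\mathfrak{I} \not\subseteq \mathfrak{M}$ and pick $a \in \mathfrak{I} \setminus \mathfrak{M}$. The key step is to produce the $F$-hyperideal $\mathfrak{N}$ generated by $\mathfrak{M} \cup \{a\}$: mimicking the proof that $<x>_F$ is an $F$-hyperideal, this set has the explicit form
\[\mathfrak{N} = \bigcup_{m \in \mathfrak{M},\, r \in \mathfrak{R}} supp(f(m,\, supp(g(r,a,e^{\prime^{(n-2)}})),\, e^{(m-2)})),\]
and by maximality $\mathfrak{N} = \mathfrak{R}$, so there exist $m \in \mathfrak{M}$, $r \in \mathfrak{R}$, and $c \in supp(g(r,a,e^{\prime^{(n-2)}}))$ with $e^\prime \in supp(f(m,c,e^{(m-2)}))$. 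Axiom~(2) at the first coordinate then gives $m \in supp(f(e^\prime,c^{-1},e^{(m-2)}))$. Since $c \in \mathfrak{I}$ (as $a \in \mathfrak{I}$ and $\mathfrak{I}$ absorbs $g$) and $\mathfrak{I}$ is closed under inverses, $c^{-1} \in \mathfrak{I}$, so $m \in supp(f(e^\prime,\mathfrak{I},e^{(m-2)}))$. By hypothesis $m$ is $F$-invertible; but then $\{e^\prime\} = supp(g(m,m^{-1},e^{\prime^{(n-2)}})) \subseteq \mathfrak{M}$ by absorption, giving $\mathfrak{M} = \mathfrak{R}$, again a contradiction.

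The main obstacle will be the explicit description of the $F$-hyperideal $\mathfrak{N}$ generated by $\mathfrak{M} \cup \{a\}$ in the forward direction. Verifying that the displayed set is indeed an $F$-hyperideal containing $\mathfrak{M}$ and $a$ parallels the argument for $<x>_F$, but demands somewhat more intricate bookkeeping with the associativity of $f$, the distributivity of $g$ over $f$, and the monotonicity of $supp$ under inclusion of fuzzy subsets (Proposition~3.2 in \cite{asli}). Once this is established, all remaining manipulations are direct transcriptions of the classical proof of the unit-theoretic characterization of $J(R)$.
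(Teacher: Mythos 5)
Your proposal is correct and follows essentially the same route as the paper's proof: both directions hinge on the same maximal-ideal arguments, the reversibility axiom of the canonical $F^m$-hypergroup to move $e^\prime$ in and out of supports, and the sum-hyperideal $supp(f(\mathfrak{M},<a>_F,e^{(m-2)}))$ (which the paper obtains by citing Lemma~3.12 of \cite{asli} rather than constructing explicitly as you do). The only cosmetic differences are the order of the two implications and your more explicit justification that a non-$F$-invertible element lies in some maximal $F$-hyperideal.
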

\begin{proof}
$\Longrightarrow$ Suppose that every element of $supp(f(e^\prime,\mathfrak{I},e^{(n-2)}))$ is $F$-invertible. Let $\mathfrak{I} \nsubseteq J(\mathfrak{R})$. Then there exists a maximal  $F$-hyperideal $\mathfrak{M}$ of $\mathfrak{R}$ such that $\mathfrak{I} \nsubseteq \mathfrak{M}$. Let $x \in \mathfrak{I}$ but $x \notin \mathfrak{M}$. By Lemma 3.12 in \cite{asli}, $supp(f(\mathfrak{M},<x>_F,e^{(m-2)}))$ is an $F$-hyperideal of $\mathfrak{R}$. Since $\mathfrak{M} \subseteq  supp(f(\mathfrak{M},<x>_F,e^{(m-2)}))$ and $\mathfrak{M}$ is a maximal $F$-hyperideal of $\mathfrak{R}$, we have $  supp(f(\mathfrak{M},<x>_F,e^{(m-2)}))=\mathfrak{R}$ and so $e^\prime \in supp(f(\mathfrak{M},<x>_F,e^{(m-2)}))$. This means that there exist $m  \in supp(\chi_{\mathfrak{M}})=\mathfrak{M}$ and $a \in supp(\chi_{<x>_F})=<x>_F$ such that $e^\prime \in supp(f(m,a,e^{(m-2)}))$.  Since $(\mathfrak{R},f)$ is a canonical $F^m$-hypergroup, then $ m \in supp(f(e^\prime,-a,e^{(m-2)}))$. Since $supp(f(e^\prime,-a,e^{(m-2)})) \subseteq supp(f(e^\prime,-g(r,x,e^\prime),e^{(m-2)}))$ for some $r \in \mathfrak{R}$, we have $ m \in supp(f(e^\prime,g(r,x,e^\prime),e^{(m-2)}))$ which implies $ m \in supp(f(e^\prime,\mathfrak{I},e^{(m-2)}))$. This $m$ is $F$-invertible, a contradiction.\\
$\Longleftarrow$ Suppose that $\mathfrak{I} \subseteq J(\mathfrak{R})$. Let $a \in supp(f(e^\prime,\mathfrak{I},e^{(m-2)}))$ is not $F$-invertible. Then there exists $ x \in \mathfrak{I}$ such that  $a \in supp(f(e^\prime,x,e^{(m-2)}))$. We have $a \in \mathfrak{M}$ for some maximal $F$-hyperideal $\mathfrak{M}$, because $a$ is not $F$-invertible. From $a \in supp(f(e^\prime,x,e^{(m-2)}))$, it follows that $e^\prime \in supp(f(a,-x,e^{(m-2)})) \subseteq \mathfrak{M}$, a contradiction. Thus every element of $supp(f(e^\prime,\mathfrak{I},e^{(m-2)}))$ is $F$-invertible.
\end{proof}
In view of Theorem \ref{har}, we have the following result.
\begin{corollary}
Let $\mathfrak{M}$ be a maximal  $F$-hyperideal of a Krasner $F^{(m,n)}$-hyperring with the scalar $F$-identity $e^\prime$. If  every element of $supp(f(e^\prime,\mathfrak{M},e^{(m-2)}))$ is $F$-invertible, then $\mathfrak{M}$ is the only maximal
hyperideal of $\mathfrak{R}$.
\end{corollary}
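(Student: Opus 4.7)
The plan is to derive this corollary directly from Theorem \ref{har} with essentially no new hyperring calculation. The key observation is that the hypothesis of the corollary is exactly the hypothesis of the forward direction of Theorem \ref{har} applied to $\mathfrak{I} = \mathfrak{M}$, so I would begin by invoking Theorem \ref{har} to conclude that $\mathfrak{M} \subseteq J(\mathfrak{R})$.

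Next I would unpack the definition of the Jacobson radical: $J(\mathfrak{R})$ is the intersection of all maximal $F$-hyperideals of $\mathfrak{R}$, so in particular $J(\mathfrak{R}) \subseteq \mathfrak{N}$ for every maximal $F$-hyperideal $\mathfrak{N}$ of $\mathfrak{R}$. Combining with the previous step gives $\mathfrak{M} \subseteq \mathfrak{N}$ for each maximal $F$-hyperideal $\mathfrak{N}$.

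To finish, I would fix an arbitrary maximal $F$-hyperideal $\mathfrak{N}$, use the inclusion $\mathfrak{M} \subseteq \mathfrak{N} \subseteq \mathfrak{R}$, and apply the definition of maximality of $\mathfrak{M}$: either $\mathfrak{N} = \mathfrak{M}$ or $\mathfrak{N} = \mathfrak{R}$. Since $\mathfrak{N}$ is itself a maximal $F$-hyperideal, it is proper, so the second case is excluded and $\mathfrak{N} = \mathfrak{M}$. This shows uniqueness. The main (minor) obstacle is simply being careful that the definition of maximal $F$-hyperideal used in the paper excludes $\mathfrak{R}$ itself; since the definition given reads $\mathfrak{M} \subseteq \mathfrak{N} \subseteq \mathfrak{R}$ implies $\mathfrak{N}=\mathfrak{M}$ or $\mathfrak{N}=\mathfrak{R}$, a maximal $F$-hyperideal is by convention proper, so the argument goes through without incident.
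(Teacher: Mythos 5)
Your proposal is correct and is exactly the deduction the paper intends: the corollary is stated immediately after Theorem \ref{har} with the remark ``In view of Theorem \ref{har}'' and no written proof, and the intended argument is precisely yours --- apply Theorem \ref{har} with $\mathfrak{I}=\mathfrak{M}$ to get $\mathfrak{M}\subseteq J(\mathfrak{R})\subseteq\mathfrak{N}$ for every maximal $F$-hyperideal $\mathfrak{N}$, then use maximality and properness of $\mathfrak{N}$ to conclude $\mathfrak{N}=\mathfrak{M}$. Your side remark about properness being implicit in the paper's definition of maximality is a fair and correctly resolved point.
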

\begin{theorem} \label{max}
Suppose that  $T$ is a non-empty subset of a  Krasner $F^{(m, n)}$-hyperring $(\mathfrak{R}, f , g)$ that is support closed under $g$ and $\mathfrak{I}$ is an $F$-hyperideal of $\mathfrak{R}$ such that $\mathfrak{I} \cap T =\varnothing$ . Then there exists an $F$-hyperideal $\mathfrak{P}$ which is maximal in the set of all hyperideals of $\mathfrak{R}$ disjoint from $T$ containing $\mathfrak{I}$. Furthermore any such $F$-hyperideal is  prime.
\end{theorem}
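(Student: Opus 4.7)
The plan is a two-part argument: first apply Zorn's Lemma to produce $\mathfrak{P}$, and then establish primality by contradiction using the characterization in Theorem \ref{one}.

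For the first part, I would let $\Sigma = \{\mathfrak{J} \mid \mathfrak{J} \text{ is an } F\text{-hyperideal of } \mathfrak{R},\ \mathfrak{I} \subseteq \mathfrak{J},\ \mathfrak{J} \cap T = \varnothing\}$, ordered by inclusion. $\Sigma$ is nonempty since $\mathfrak{I} \in \Sigma$. Given a chain $\{\mathfrak{J}_\alpha\}$ in $\Sigma$, its union $\mathfrak{J}^* = \bigcup \mathfrak{J}_\alpha$ is again in $\Sigma$: disjointness from $T$ is clear, and closure of $\mathfrak{J}^*$ in the senses required (support closed under $f$, support closed under $g$, and $supp(g(a_1^{i-1},\mathfrak{J}^*,a_{i+1}^n)) \subseteq \mathfrak{J}^*$) follows because any finite collection of elements of $\mathfrak{J}^*$ lies in some single $\mathfrak{J}_\alpha$ of the chain. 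Zorn's Lemma then yields a maximal element $\mathfrak{P}$ of $\Sigma$.

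For the second part, I would verify primality via Theorem \ref{one}. Suppose, toward a contradiction, that there exist $a_1,\dots,a_n \in \mathfrak{R}$ with $supp(g(a_1^n)) \subseteq \mathfrak{P}$ but $a_i \notin \mathfrak{P}$ for each $1 \le i \le n$. For each $i$, the $F$-hyperideal $\mathfrak{P}_i$ generated by $\mathfrak{P} \cup \{a_i\}$ (explicitly contained in $supp(f(\mathfrak{P},\langle a_i\rangle_F,e^{(m-2)}))$) properly contains $\mathfrak{P}$, so by maximality $\mathfrak{P}_i \cap T \ne \varnothing$, yielding some $t_i \in T$ with $t_i \in supp(f(p_i, s_i, e^{(m-2)}))$ for some $p_i \in \mathfrak{P}$ and $s_i \in supp(g(r_i,a_i,e^{\prime(n-2)}))$ with $r_i \in \mathfrak{R}$.

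Now I would expand $supp(g(t_1^n))$ using $n$-ary distributivity of $g$ over $f$ (axiom (iii) of the Krasner $F^{(m,n)}$-hyperring, applied iteratively in each coordinate) to see that $supp(g(t_1^n))$ is contained in the union of supports of $g$-expressions in which every argument is drawn from $\{p_i, s_i\}$ in coordinate $i$. Every such summand in which at least one coordinate $i$ contributes $p_i \in \mathfrak{P}$ lies in $\mathfrak{P}$ by the $F$-hyperideal property of $\mathfrak{P}$. The single remaining summand uses every $s_i$; commutativity and associativity let us reshuffle it to the form $supp(g(r_1,\dots,r_n,supp(g(a_1^n)),e^{\prime(\cdot)}))$, which again lies in $\mathfrak{P}$ because $supp(g(a_1^n)) \subseteq \mathfrak{P}$. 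Hence $supp(g(t_1^n)) \subseteq \mathfrak{P}$. On the other hand, $T$ being support closed under $g$ gives $supp(g(t_1^n)) \subseteq T$, so $\mathfrak{P} \cap T \ne \varnothing$, contradicting the defining property of $\mathfrak{P}$.

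The main obstacle is the bookkeeping in the distributive expansion: in the ordinary ring case one just writes $(p_1 + r_1 a_1)\cdots(p_n + r_n a_n)$ and groups terms, but here each $t_i$ lives in the support of an $f$-expression, the product is an $n$-ary $g$-expression on supports, and one must justify the rearrangement to pull the $a_i$'s together into the factor $supp(g(a_1^n))$. This requires a careful appeal to Proposition 3.2 of \cite{asli} (to move between $f(\mu_1^m)$ and pointwise supports) and repeated application of distributivity in each of the $n$ coordinates, along with commutativity of $g$.
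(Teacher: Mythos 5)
Your proposal is correct and follows essentially the same route as the paper: Zorn's Lemma applied to the poset of $F$-hyperideals containing $\mathfrak{I}$ and disjoint from $T$, followed by a contradiction argument in which each $supp(f(\mathfrak{P},\langle a_i\rangle_F,e^{(m-2)}))$ must meet $T$ and the $n$-ary distributive expansion of the resulting product places an element of $T$ inside $\mathfrak{P}$. Your verification that the union of a chain remains in $\Sigma$ is in fact more careful than the paper, which invokes Zorn's Lemma without checking the upper-bound condition.
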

\begin{proof}
Let $\Sigma$ be the set of all hyperideals
of $\mathfrak{R}$ disjoint from $T$ containing $\mathfrak{I}$. Since $\mathfrak{I} \in \Sigma$, then $\Sigma \neq \varnothing$. Thus $\Sigma$ is a partially ordered set with respect to set inclusion relation. Then  there is an $F$-hyperideal $\mathfrak{P}$  which is maximal in $\Sigma$, by Zorn$^,$s lemma. Our task now is to show that $\mathfrak{P}$ is a prime $F$-hyperideal of $\mathfrak{R}$. Let $supp(g(a_1^n)) \subseteq \mathfrak{P}$ for some $a_1^n \in \mathfrak{R}$ such that for all $1 \leq i \leq n$, $a_i \notin \mathfrak{P}$. Then for each $1 \leq i \leq n$, $\mathfrak{P} \subseteq supp(f(\mathfrak{P},<a_i>,e^{(m-2)}))$. By maximality of $\mathfrak{P}$, we conclude that $supp(f(\mathfrak{P},<a_i>,e^{(m-2)})) \cap T \neq \varnothing$. Hence there exist $p_1^n \in \mathfrak{P}$ and $x_i \in <a_i>$ such that $supp(f(p_i,x_i,e^{(m-2)})) \cap T \neq \varnothing$ for each $1 \leq i \leq n$. Since $T$ is  support closed under $g$, then  there exist $r_i \in supp(f(p_i,x_i,e^{(m-2)}))$ for each $1 \leq i \leq n$ such that $supp(g(r_1^n)) \cap T \neq \varnothing$. Therefore \\

$supp(g(r_1^n)) \subseteq supp(g(f(p_1,x_1,e^{(m-2)})),\dots,f(p_n,x_n,e^{(m-2)})))$

$\hspace{1.9cm}=supp(f(g(p_1^n),g(p_1^{n-1},x_n),\cdots,g(p_1,x_2^n), \cdots, g(x_1^n),e^{(m-2^n)}))$

$\hspace{1.9cm}\subseteq \mathfrak{P}.$\\
This means that $\mathfrak{P} \cap T \neq \varnothing$ which is contradiction with $\mathfrak{P} \in \Sigma$. Thus, by Theorem \ref{one}, $\mathfrak{P}$ is a prime $F$-hyperideal of $\mathfrak{R}$.
\end{proof}
\begin{definition}
Let $\mathfrak{I}$ be an $F$-hyperideal of a  Krasner $F^{(m, n)}$-hyperring $(\mathfrak{R}, f , g)$ with
scalar $F$-identity $e^\prime$. 
The intersection of all  prime $F$-hyperideals  of $\mathfrak{R}$  containing $\mathfrak{I}$ is called $F$-radical of $\mathfrak{I}$, being denoted by $\sqrt{\mathfrak{I}}^{F}$. If $\mathfrak{R}$ does not have any prime $F$-hyperideal containing $\mathfrak{I}$, we define  $\sqrt{\mathfrak{I}}^{F}=\mathfrak{R}$.
\end{definition}
The following theorem gives an alternative definition of $\sqrt{\mathfrak{I}}^{F}$.
\begin{theorem}
$\mathfrak{I}$ be an $F$-hyperideal of a  Krasner $F^{(m, n)}$-hyperring $(\mathfrak{R}, f , g)$ with
scalar $F$-identity $e^\prime$. Then

$
\sqrt{\mathfrak{I}}^F= \biggm{\{} a \in \mathfrak{R} \ \vert \ \biggm{\{}
 \begin{array}{lr}
 supp(g(a^{(s)},e^{\prime^{(n-s)}}))\subseteq \mathfrak{I},& s \leq n\\
 supp(g_{(l)}(a^{(s)}))\subseteq \mathfrak{I} & s>n, s=l(n-1)+1
 \end{array}
 \biggm{\}} \biggm{\}}.$
 \end{theorem}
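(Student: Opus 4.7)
The plan is to prove both inclusions between $\sqrt{\mathfrak{I}}^F$ and the set displayed on the right-hand side, which I denote by $A$.

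For the inclusion $A\subseteq \sqrt{\mathfrak{I}}^F$, fix $a\in A$ and let $\mathfrak{P}$ be an arbitrary prime $F$-hyperideal containing $\mathfrak{I}$; the goal is to deduce $a\in\mathfrak{P}$. In the small-exponent case $s\leq n$, the assumption $supp(g(a^{(s)},e'^{(n-s)}))\subseteq\mathfrak{I}\subseteq\mathfrak{P}$ together with Theorem \ref{one} forces one of the arguments $a,e'$ into $\mathfrak{P}$; if $e'\in\mathfrak{P}$ then the scalar $F$-identity property collapses $\mathfrak{P}$ to $\mathfrak{R}$, so in any case $a\in\mathfrak{P}$. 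In the iterated case $s=l(n-1)+1>n$ I proceed by induction on $l$. The base $l=1$ reduces to the previous case applied to $supp(g(a^{(n)}))$. For the step, associativity of $g$ yields $g_{(l)}(a^{(s)})=g(g_{(l-1)}(a^{(s')}),a^{(n-1)})$ with $s'=(l-1)(n-1)+1$; applying the defining property of a prime $F$-hyperideal to the fuzzy inputs $\mu_1=g_{(l-1)}(a^{(s')})$ and $\mu_2=\cdots=\mu_n=\chi_{\{a\}}$ yields either $a\in\mathfrak{P}$ or $supp(g_{(l-1)}(a^{(s')}))\subseteq\mathfrak{P}$, the latter closing by the induction hypothesis.

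For the reverse inclusion $\sqrt{\mathfrak{I}}^F\subseteq A$, I argue by contrapositive. Assume $a\notin A$ and construct a prime $F$-hyperideal $\mathfrak{P}$ containing $\mathfrak{I}$ with $a\notin\mathfrak{P}$. Let $T$ be the union of $\{a\}$ together with the supports of all pure powers of $a$ appearing in the definition of $A$; by hypothesis $T\cap\mathfrak{I}=\varnothing$. The key step is to verify that $T$ is support closed under $g$: if $t_i\in T$ is witnessed by a pure power of degree $s_i$, then Proposition 3.2 of \cite{asli} together with the associativity of $g$ and the scalar identity $supp(g(r,e'^{(n-1)}))=\{r\}$ show that $supp(g(t_1^n))$ sits inside the support of a pure power of $a$ of total degree $s_1+\cdots+s_n$. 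The index computation $l':=l_1+\cdots+l_n+1$ gives $l'(n-1)+1=s_1+\cdots+s_n$, so the resulting exponent is again of admissible form (or is small enough to be handled by the padding case), and hence $supp(g(t_1^n))\subseteq T$. Theorem \ref{max} now supplies a prime $F$-hyperideal $\mathfrak{P}$ containing $\mathfrak{I}$ and disjoint from $T$; since $a\in T$, this is the desired $\mathfrak{P}$.

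The principal obstacle is precisely the closure of $T$ under $g$: combining $n$ pure powers of possibly mixed type (some padded with $e'$, others iterated) into a single pure power requires some care with associativity and with the scalar identity to absorb the padding $e'$'s, and the exponent bookkeeping must reconcile both admissible forms. With this closure in place, the rest of the argument is an immediate appeal to Theorems \ref{one} and \ref{max}.
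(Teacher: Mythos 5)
Your proposal is correct and follows essentially the same route as the paper: the forward inclusion by repeatedly (in your case, inductively) applying the prime condition to peel copies of $a$ off the power, and the reverse inclusion by forming the $g$-support-closed set $T$ of powers of $a$ disjoint from $\mathfrak{I}$ and invoking Theorem \ref{max}. Your treatment is in fact slightly more careful than the paper's at the two points it glosses over (the "continuing this process" step and the closure of $T$ under $g$), but the argument is the same.
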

 \begin{proof}
 Let $a \in \sqrt{\mathfrak{I}}^F$ and let $\mathfrak{P}$ be a prime $F$-hyperideal of $\mathfrak{R}$ with $\mathfrak{I} \subseteq \mathfrak{P}$. Thus there exists $s \in \mathbb{N}$ with $ supp(g(a^{(s)},e^{\prime^{(n-s)}}))\subseteq \mathfrak{I}$ for $s \leq n$, or 
 $supp(g_{(l)}(a^{(s)}))\subseteq \mathfrak{I}$ for $ s=l(n-1)+1$. In the first case, we have $ supp(g(a^{(s)},e^{\prime^{(n-s)}}))\subseteq \mathfrak{P}$ and so $ supp(g(a,g(a^{(s-1)},e^{\prime^{(n-s+1)}}),e^{\prime^{(n-2)}}))\subseteq \mathfrak{P}$. Since $\mathfrak{P}$ is a prime $F$-hyperideal of $\mathfrak{R}$,  we get $a \in \mathfrak{P}$ or $supp(g(a^{(s-1)},e^{\prime^{(n-s+1)}})) \subseteq \mathfrak{P}$.From $supp(g(a^{(s-1)},e^{\prime^{(n-s+1)}})) \subseteq \mathfrak{P}$, it follows that $supp(g(a,g(a^{(s-2)},e^{\prime^{(n-s+2)}}),e^{\prime^{(n-2)}}) \subseteq \mathfrak{P}$ which implies $a \in \mathfrak{P}$ or $supp(g(a^{(s-2)},e^{\prime^{(n-s+2)}})) \subseteq \mathfrak{P}$. By continuing this process, we obtain $a \in  \mathfrak{P}$. Hence we have $a \in  \mathfrak{P}$ for all $\mathfrak{I} \subseteq \mathfrak{P}$ and so $a \in \bigcap_{\mathfrak{I} \subseteq \mathfrak{P}}\mathfrak{P}$. This means $\sqrt{\mathfrak{I}}^F \subseteq \bigcap_{\mathfrak{I} \subseteq \mathfrak{P}}\mathfrak{P}$. In the second case, we get $supp(g(g(\dots g(g(a^{(n)}),a^{(n-1)}),\dots),a^{(n-1)})) \subseteq \mathfrak{P}$. By using a similar argument , we get $\sqrt{\mathfrak{I}}^F \subseteq \bigcap_{\mathfrak{I} \subseteq \mathfrak{P}}\mathfrak{P}$. Now, suppose that $a \in \bigcap_{\mathfrak{I} \subseteq \mathfrak{P}}\mathfrak{P}$ but $a \notin \sqrt{\mathfrak{I}}^F$. Hence we conclude that for every $s \in \mathbb{N}$, $supp(g(a^{(s)},e^{\prime^{(n-s)}})) \nsubseteq \mathfrak{I}$. Let $T=\{e^\prime,a\} \cup \{r \in supp(g(a^{(t)},e^{\prime^{(n-t)}})) \ \vert \ 2 \leq t \}$. Clearly,   $T$ is a subset of $\mathfrak{R}$ that is support closed under $g$ and $T \cap \mathfrak{I}=\varnothing$. By Theorem \ref{max}, there exists a prime $F$-hyperideal $\mathfrak{P}$ with $\mathfrak{I} \subseteq \mathfrak{P}$ and $T \cap \mathfrak{P}=\varnothing$. This means $a \notin \mathfrak{P}$. This is contradiction as $a \in \bigcap_{\mathfrak{I} \subseteq \mathfrak{P}}\mathfrak{P}$. Thus $a \in \sqrt{\mathfrak{I}}^F$. Consequently, $\sqrt{\mathfrak{I}}^F=\bigcap_{\mathfrak{I} \subseteq \mathfrak{P}}\mathfrak{P}$.
 \end{proof}
\section{Primary $F$-hyperideals}
In this section, we aim to  present the definition of  primary $F$-hyperideals in a Krasner $F^{(m,n)}$-hyperring $(\mathfrak{R},f,g)$ and give some basic  properties of them.  
\begin{definition}
An $F$-hyperideal $\mathfrak{Q}$ of a Krasner $F^{(m,n)}$-hyperring $(\mathfrak{R},f,g)$ (with scalar $F$-identity $e^\prime$) is called a primary $F$-hyperideal if for  all $\mu_1^n \in L^{\mathfrak{R}}_*$, $supp(g(\mu_1^n)) \subseteq \mathfrak{Q}$ implies that $supp(\mu_i) \subseteq \mathfrak{Q}$ or $supp(g(\mu_1^{i-1},\chi_{\{e^\prime\}},\mu_{i+1}^n)) \subseteq \sqrt{\mathfrak{Q}}^F$ for some $1 \leq i \leq n$.
\end{definition}
\begin{theorem}
Let  $\mathfrak{Q}$ be an $F$-hyperideal of a Krasner $F^{(m,2)}$-hyperring $(\mathfrak{R},f,g)$ (with scalar $F$-identity $e^\prime$). Then $\mathfrak{Q}$ is primary if and only if for all $a_1^2 \in\mathfrak{R}$, $supp(g(a_1^2)) \subseteq \mathfrak{Q}$ implies that $a_1 \in \mathfrak{Q}$ or $a_2 \in \sqrt{\mathfrak{Q}}^F$.
\end{theorem}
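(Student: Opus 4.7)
The plan is to imitate the proof of Theorem \ref{one}, converting the elementwise condition into the fuzzy one by passing through characteristic functions of fuzzy points, and using the scalar $F$-identity $e^\prime$ to simplify the auxiliary ``$\chi_{\{e^\prime\}}$'' slot that appears in the primary definition.

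For the forward implication I would take arbitrary $a_1, a_2 \in \mathfrak{R}$ with $supp(g(a_1,a_2)) \subseteq \mathfrak{Q}$, set $\mu_1 = \chi_{\{a_1\}}$ and $\mu_2 = \chi_{\{a_2\}}$, so that $supp(g(\mu_1,\mu_2)) = supp(g(a_1,a_2)) \subseteq \mathfrak{Q}$. The primary $F$-hyperideal hypothesis then supplies some $i \in \{1,2\}$ with either $supp(\mu_i) \subseteq \mathfrak{Q}$ or $supp(g(\mu_1^{i-1}, \chi_{\{e^\prime\}}, \mu_{i+1}^2)) \subseteq \sqrt{\mathfrak{Q}}^F$. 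The first alternative immediately yields $a_i \in \mathfrak{Q}$; in the second, the scalar $F$-identity property $supp(g(e^\prime,b)) = \{b\}$ collapses the expression to $\{a_{3-i}\} \subseteq \sqrt{\mathfrak{Q}}^F$. Commutativity of $g$ then lets me relabel so that the outcome matches the claimed dichotomy ``$a_1 \in \mathfrak{Q}$ or $a_2 \in \sqrt{\mathfrak{Q}}^F$''.

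For the converse I would start from $\mu_1, \mu_2 \in L^{\mathfrak{R}}_*$ satisfying $supp(g(\mu_1,\mu_2)) \subseteq \mathfrak{Q}$ and split into two cases. If $supp(\mu_1) \subseteq \mathfrak{Q}$, the primary definition already holds at $i=1$. Otherwise I fix some $a_1 \in supp(\mu_1) \setminus \mathfrak{Q}$; for every $a_2 \in supp(\mu_2)$, Proposition 3.2 of \cite{asli} gives $supp(g(a_1,a_2)) \subseteq supp(g(\mu_1,\mu_2)) \subseteq \mathfrak{Q}$, and the elementwise hypothesis forces $a_2 \in \sqrt{\mathfrak{Q}}^F$. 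Hence $supp(\mu_2) \subseteq \sqrt{\mathfrak{Q}}^F$, and expanding $g(\chi_{\{e^\prime\}},\mu_2) = \bigcup_{b \in supp(\mu_2)} g(e^\prime,b)$ together with the scalar-identity property yields $supp(g(\chi_{\{e^\prime\}},\mu_2)) = supp(\mu_2) \subseteq \sqrt{\mathfrak{Q}}^F$, which is precisely the second alternative of the primary definition at $i=1$.

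The main technical point I expect to be careful about is bookkeeping rather than conceptual: verifying the scalar-identity reduction $supp(g(\chi_{\{e^\prime\}},\mu)) = supp(\mu)$ cleanly, and reconciling the inherent asymmetry in the primary definition (which offers the $\chi_{\{e^\prime\}}$-alternative on the ``other'' slot) with the asymmetric statement of the theorem. The latter requires invoking commutativity of $g$, or equivalently handling the $i=1$ and $i=2$ cases of the forward direction separately and using $\mathfrak{Q} \subseteq \sqrt{\mathfrak{Q}}^F$ to absorb the ``wrong-side'' cases into the stated dichotomy.
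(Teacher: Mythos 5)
Your proposal is essentially the paper's own proof. The forward direction there likewise passes to characteristic functions and applies the primary definition; the converse fixes witnesses $x_1\in supp(\mu_1)\setminus\mathfrak{Q}$, $x_2\in supp(\mu_2)$, uses Proposition 3.2 of \cite{asli} to get $supp(g(x_1,x_2))\subseteq supp(g(\mu_1,\mu_2))\subseteq\mathfrak{Q}$, and concludes from the elementwise hypothesis --- the paper phrases this as a contradiction, you phrase it directly, but it is the same argument. The scalar-identity collapse $supp(g(\chi_{\{e'\}},\mu))=supp(\mu)$ that you spell out is used silently in the paper.

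The one substantive point is the asymmetry you yourself flag in the forward direction, and your proposed repair does not fully close it. If the primary definition is satisfied only at $i=2$, it yields ``$a_2\in\mathfrak{Q}$ or $supp(g(\mu_1,\chi_{\{e'\}}))=\{a_1\}\subseteq\sqrt{\mathfrak{Q}}^F$''. The first disjunct is indeed absorbed into the stated dichotomy via $\mathfrak{Q}\subseteq\sqrt{\mathfrak{Q}}^F$, but the second disjunct, $a_1\in\sqrt{\mathfrak{Q}}^F$, does not give ``$a_1\in\mathfrak{Q}$ or $a_2\in\sqrt{\mathfrak{Q}}^F$'', and relabelling by commutativity merely exchanges the roles of $a_1$ and $a_2$ without helping. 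So under the literal ``for some $i$'' reading of Definition 4.1 this sub-case is a genuine gap. To be fair, the paper's proof does not address it either --- it simply writes down the $i=1$ conclusion --- so this is a defect inherited from the source rather than introduced by you; it disappears only if the definition is read so that the required alternative holds for the slot containing the factor that fails to lie in $\mathfrak{Q}$, as in the classical condition $ab\in Q\Rightarrow a\in Q$ or $b\in\sqrt{Q}$ applied to both orderings.
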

\begin{proof}
$\Longrightarrow$ Let $\mathfrak{Q}$ is a primary $F$-hyperideal of $\mathfrak{R}$. Suppose that $supp(g(a_1^2)) \subseteq \mathfrak{Q}$ for some $a_1^2 \in\mathfrak{R}$. Since $supp(g(a_1^2)) =supp(g(\chi_{\{a_1\}},\chi_{\{a_2\}} ))$, then we have $supp(g(\chi_{\{a_1\}},\chi_{\{a_2\}} )) \subseteq \mathfrak{Q}$. Since $\mathfrak{Q}$ is a primary $F$-hyperideal of $\mathfrak{R}$, we get $supp(\chi_{\{a_1\}}) \subseteq \mathfrak{Q}$ or $supp(\chi_{\{a_2\}}) \subseteq \sqrt{\mathfrak{Q}}^F$. This means $a_1 \in \mathfrak{Q}$ or $a_2 \in \sqrt{\mathfrak{Q}}^F$.\\
$\Longrightarrow$ Let $supp(g(\mu_1^2)) \subseteq \mathfrak{Q}$ for some $\mu_1^2 \in L^{\mathfrak{R}}_*$ such that neither $supp(\mu_1) \subseteq \mathfrak{Q}$ nor $supp(\mu_2) \subseteq \sqrt{\mathfrak{Q}}^F$. Suppose that $x_1 \in supp(\mu_1) - \mathfrak{Q}$ and $x_2 \in supp(\mu_2) - \sqrt{\mathfrak{Q}}^F$. Clearly, $supp(g(x_1^2)) \subseteq  supp(g(\mu_1^2)) \subseteq \mathfrak{Q}$. By the hypothesis, we get $x_1 \in \mathfrak{Q}$ or $x_2 \in \sqrt{\mathfrak{Q}}^F$, a contradiction. 
\end{proof}
\begin{corollary}
Let  $\mathfrak{Q}$ be an $F$-hyperideal of a Krasner $F^{(m,n)}$-hyperring $(\mathfrak{R},f,g)$ (with scalar $F$-identity $e^\prime$). Then $\mathfrak{Q}$ is primary if and only if for all $a_1^n \in\mathfrak{R}$, $supp(g(a_1^n)) \subseteq \mathfrak{Q}$ implies that $a_i \in \mathfrak{Q}$ or $supp(g(a_1^{i-1},e^\prime,a_{i+1}^n)) \subseteq \sqrt{\mathfrak{Q}}^F$ for some $1 \leq i \leq n$.
\end{corollary}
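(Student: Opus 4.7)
The plan is to mirror the proof of the preceding $n=2$ theorem, bridging between the fuzzy definition of primary and the element-level condition via characteristic functions of singletons. The forward direction is immediate: given $a_1^n \in \mathfrak{R}$ with $supp(g(a_1^n)) \subseteq \mathfrak{Q}$, specialise the primary definition to the fuzzy points $\mu_i = \chi_{\{a_i\}}$. Since $supp(g(\chi_{\{a_1\}},\dots,\chi_{\{a_n\}})) = supp(g(a_1^n)) \subseteq \mathfrak{Q}$, primality yields some $i$ with either $supp(\chi_{\{a_i\}}) = \{a_i\} \subseteq \mathfrak{Q}$, equivalent to $a_i \in \mathfrak{Q}$, or $supp(g(\chi_{\{a_1\}},\dots,\chi_{\{e'\}},\dots,\chi_{\{a_n\}})) = supp(g(a_1^{i-1},e',a_{i+1}^n)) \subseteq \sqrt{\mathfrak{Q}}^F$, which is precisely the element conclusion.

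For the backward direction I would argue by contradiction, following the same shape as the $n=2$ theorem. Let $\mu_1^n \in L^{\mathfrak{R}}_*$ with $supp(g(\mu_1^n)) \subseteq \mathfrak{Q}$, and assume no $i$ realises the fuzzy conclusion, so that for every $i$ both $supp(\mu_i) \not\subseteq \mathfrak{Q}$ and $supp(g(\mu_1^{i-1},\chi_{\{e'\}},\mu_{i+1}^n)) \not\subseteq \sqrt{\mathfrak{Q}}^F$. From the first non-containment, pick $x_j \in supp(\mu_j) \setminus \mathfrak{Q}$ for each $j$. By Proposition 3.2 of \cite{asli}, $supp(g(x_1^n)) \subseteq supp(g(\mu_1^n)) \subseteq \mathfrak{Q}$, so the element hypothesis furnishes an index $i_0$ with either $x_{i_0} \in \mathfrak{Q}$ (impossible by construction) or $supp(g(x_1^{i_0-1},e',x_{i_0+1}^n)) \subseteq \sqrt{\mathfrak{Q}}^F$; hence the second alternative holds for that $i_0$.

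The main obstacle is bridging from the single-tuple containment just established to the full-set non-containment $supp(g(\mu_1^{i_0-1},\chi_{\{e'\}},\mu_{i_0+1}^n)) \not\subseteq \sqrt{\mathfrak{Q}}^F$ that we have assumed, since the first set is only one of the pieces making up the second. To close the gap I would refine the choice of the $x_j$'s for $j \neq i_0$ by using the non-containment at $i_0$: there exist $b_{j,i_0} \in supp(\mu_j)$ for $j \neq i_0$ with $supp(g(b_{1,i_0},\dots,e'_{i_0},\dots,b_{n,i_0})) \not\subseteq \sqrt{\mathfrak{Q}}^F$, and these $b_{j,i_0}$ can be arranged to lie outside $\mathfrak{Q}$ because $\mathfrak{Q}$ is an $F$-hyperideal (otherwise the support collapses into $\mathfrak{Q} \subseteq \sqrt{\mathfrak{Q}}^F$, contradicting the witness). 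Applying the element hypothesis to $(b_{1,i_0},\dots,b_{i_0-1,i_0},x_{i_0},b_{i_0+1,i_0},\dots,b_{n,i_0})$ rules out the $\mathfrak{Q}$-alternative at every coordinate; the subcase in which the returned index equals $i_0$ contradicts the choice of $b_{j,i_0}$ directly, and the residual subcases in which the index differs from $i_0$ are dispatched by iterating the same substitution procedure. This iterative closing step, which aligns the existential quantifier on $i$ between the element-level and fuzzy-level formulations, is the only genuinely technical point, and it parallels the direct two-element case analysis of the previous theorem.
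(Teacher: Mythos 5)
Your forward direction is correct and is exactly the routine specialization to fuzzy points that the paper's own $n=2$ theorem uses; note that the paper states this corollary without proof, so you are right that the $n=2$ argument is the only template available. You have also correctly put your finger on the real difficulty in the converse: for $n\geq 3$ the negation of the fuzzy conclusion at index $i$ only produces one witness tuple $(b_{j,i})_{j\neq i}$ per index, and these witnesses need not be combinable into a single tuple that violates the element-level conclusion at every coordinate simultaneously. Your observation that each $b_{j,i_0}$ automatically lies outside $\mathfrak{Q}$ (indeed outside $\sqrt{\mathfrak{Q}}^F$) by the absorption property is also sound.

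The gap is the final ``iterative substitution procedure,'' which is asserted rather than proved and, as described, does not close. When you apply the element hypothesis to the mixed tuple $(b_{1,i_0},\dots,x_{i_0},\dots,b_{n,i_0})$ and the returned index is some $i\neq i_0$, the containment you obtain is $supp(g(c_1^{i-1},e',c_{i+1}^n))\subseteq\sqrt{\mathfrak{Q}}^F$ for that \emph{mixed} tuple, which is not the witness tuple $(b_{j,i})_{j\neq i}$ chosen for index $i$; hence nothing is contradicted, and repeating the substitution merely generates further containments for further mixed tuples. Already for $n=3$ the returned index can bounce between the two coordinates different from $i_0$ indefinitely, and you give no invariant that forces termination. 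A clean way to close the argument is to decouple it from the witness tuples entirely: from the element-level condition one first shows that $\sqrt{\mathfrak{Q}}^F$ is a prime $F$-hyperideal (this is precisely the content and the proof of Theorem \ref{akhar}, which uses only the element-level form, so no circularity arises). Then, assuming $supp(\mu_i)\nsubseteq\mathfrak{Q}$ for all $i$, the containment $supp(g(\mu_1^n))\subseteq\mathfrak{Q}\subseteq\sqrt{\mathfrak{Q}}^F$ and the primeness of $\sqrt{\mathfrak{Q}}^F$ give $supp(\mu_{i_0})\subseteq\sqrt{\mathfrak{Q}}^F$ for some $i_0$; choosing any $i\neq i_0$, the product $g(\mu_1^{i-1},\chi_{\{e'\}},\mu_{i+1}^n)$ still contains the factor $\mu_{i_0}$, so the absorption property of the $F$-hyperideal $\sqrt{\mathfrak{Q}}^F$ forces $supp(g(\mu_1^{i-1},\chi_{\{e'\}},\mu_{i+1}^n))\subseteq\sqrt{\mathfrak{Q}}^F$, which is the required fuzzy conclusion (the degenerate case $\sqrt{\mathfrak{Q}}^F=\mathfrak{R}$ being trivial). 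You should replace the iteration with this argument.
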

The following is a direct consequence and can be proved easily and
so the proof is omited.
\begin{theorem} \label{prime}
If $\mathfrak{P}$ is a prime $F$-hyperideal of $\mathfrak{R}$, then $\mathfrak{P}$ is a primary $F$-hyperideal of $\mathfrak{R}$.
\end{theorem}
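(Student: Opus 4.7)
The plan is to simply unpack the two definitions and observe that the conclusion of primality is strictly stronger than one of the two disjuncts in the definition of primary, so there is essentially nothing to do.

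Specifically, I would begin by fixing $\mu_1^n \in L^{\mathfrak{R}}_*$ with $\operatorname{supp}(g(\mu_1^n)) \subseteq \mathfrak{P}$, which is the common hypothesis shared by both definitions. Applying the primeness of $\mathfrak{P}$ directly, I obtain some index $1 \leq i \leq n$ with $\operatorname{supp}(\mu_i) \subseteq \mathfrak{P}$. Since the definition of primary $F$-hyperideal asks for $\operatorname{supp}(\mu_i) \subseteq \mathfrak{Q}$ \emph{or} $\operatorname{supp}(g(\mu_1^{i-1},\chi_{\{e^\prime\}},\mu_{i+1}^n)) \subseteq \sqrt{\mathfrak{Q}}^F$, the first disjunct (with $\mathfrak{Q}=\mathfrak{P}$) is already established, and no further work is needed.

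It is worth noting, though not strictly required for the proof, that $\mathfrak{P} \subseteq \sqrt{\mathfrak{P}}^F$ holds trivially from the definition of $F$-radical as an intersection of prime $F$-hyperideals containing $\mathfrak{P}$, so even the alternative disjunct would be consistent. There is no real obstacle in this argument; the theorem is a direct reformulation, which is precisely why the authors chose to omit the proof. The short write-up would therefore consist of one or two lines citing the definitions of prime and primary $F$-hyperideals and invoking primeness to produce the required index.
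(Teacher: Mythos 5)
Your proposal is correct and is exactly the direct unpacking the paper has in mind (the paper omits the proof as an easy consequence): given $\operatorname{supp}(g(\mu_1^n)) \subseteq \mathfrak{P}$, primeness yields an index $i$ with $\operatorname{supp}(\mu_i) \subseteq \mathfrak{P}$, which is the first disjunct in the definition of primary with $\mathfrak{Q}=\mathfrak{P}$. Nothing further is needed.
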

The next example shows that the inverse of Theorem \ref{prime} is not true, in general.
\begin{example} Suppose that $R=[0,1]$ and $t \in (0,1]$. Then $(R,f,g)$ is a Krasner $F^{2,3}$-hyperring, where $f$ and $g$ defined by
\[
f(a,b)= \bigg{\{}
 \begin{array}{lr}
 \chi_{max\{a,b\}}& \text{if} \ a \neq b \\
 \chi_{[0,a]} &  \text{if} \ a=b
 \end{array}\]
 and
 
 $\hspace{3.2cm}g(a,b,c)=(a.b.c)_t$\\
for all $a,b,c \in R$. The $F$-hyperideal $I=[0,0.5]$ is a priamary $F$-hyperideal of $R$. But it is not a prime $F$-hyperideal of $R$ as the fact that $supp(g(0.8,0.7,.0.6)) \subseteq I$ but $0.8,0.7,0.6 \notin I$.
\end{example}
In next theorem, we establish a relationship between prime $F$-hyperideals and primary $F$-hyperideals of a Krasner $F^{(m,n)}$-hyperring $(\mathfrak{R},f,g)$.
\begin{theorem} \label{akhar}
Let  $\mathfrak{Q}$ be an $F$-hyperideal of a Krasner $F^{(m,n)}$-hyperring $(\mathfrak{R},f,g)$ (with scalar $F$-identity $e^\prime$). If $\mathfrak{Q}$ is primary, then $\sqrt{\mathfrak{Q}}^F$ is a prime $F$-hyperideal of $\mathfrak{R}$.
\end{theorem}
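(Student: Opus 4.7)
My plan is to invoke Theorem \ref{one} and thereby reduce the prime condition on $\sqrt{\mathfrak{Q}}^F$ to its elementwise form: it suffices to show that whenever $a_1^n \in \mathfrak{R}$ satisfies $supp(g(a_1^n)) \subseteq \sqrt{\mathfrak{Q}}^F$, at least one $a_i$ lies in $\sqrt{\mathfrak{Q}}^F$. So I fix such $a_1^n$ and work toward this conclusion, aiming to mirror the classical ring argument ``$(ab)^k = a^k b^k \in Q$, hence $a \in \sqrt Q$ or $b \in \sqrt Q$'' in the $n$-ary hyper-setting.

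The first step is to turn the hypothesis into a containment inside $\mathfrak{Q}$ itself. Using the elementwise characterization of $\sqrt{\mathfrak{Q}}^F$ from the previous theorem, every $c \in supp(g(a_1^n))$ admits an exponent $s_c$ (or iterated index $l_c$ when $s_c > n$) with $supp(g(c^{(s_c)}, e'^{(n-s_c)})) \subseteq \mathfrak{Q}$, respectively $supp(g_{(l_c)}(c^{(s_c)})) \subseteq \mathfrak{Q}$. Since $\mathfrak{Q}$ is an $F$-hyperideal, multiplying an element of $\mathfrak{Q}$ by any further factor keeps us in $\mathfrak{Q}$, so larger powers also work; I can therefore pick a uniform exponent $s$ (and an iteration index $l$) valid for every $c$ in the finite-support expression. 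This yields
\[
supp\bigl(g_{(l)}\bigl((g(a_1^n))^{(s)}\bigr)\bigr) \subseteq \mathfrak{Q}.
\]
Next I use the commutativity and associativity of $g$ to regroup this iterated hyperproduct so that the factors are sorted by index, producing $supp(g_{(L)}(a_1^{(s)}, a_2^{(s)}, \ldots, a_n^{(s)})) \subseteq \mathfrak{Q}$ for an appropriate $L$. After padding with scalar $F$-identities $e'$ so that each block $a_i^{(s)}$ sits in an $n$-ary slot, the corollary to the primary definition applies: either a power of $a_1$ is forced into $\mathfrak{Q}$, giving $a_1 \in \sqrt{\mathfrak{Q}}^F$ immediately, or the product of the remaining blocks (with $e'$ in the first slot) lies in $\sqrt{\mathfrak{Q}}^F$. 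In the second case I iterate the same dichotomy on the shorter grouped product; after at most $n$ rounds, one of the $a_i$'s is forced into $\sqrt{\mathfrak{Q}}^F$, which is the desired conclusion.

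The main obstacle is the bookkeeping in the middle step: rewriting the $s$-th iterated power of the fuzzy hyperproduct $g(a_1^n)$ as the canonical grouped form $g_{(L)}(a_1^{(s)}, \ldots, a_n^{(s)})$ via repeated use of associativity and commutativity, while simultaneously juggling the two cases $s \leq n$ and $s > n$ of the radical characterization. Once this normal form is reached, the iterated application of the primary hypothesis is routine, so the combinatorial setup is really the only non-mechanical part of the argument.
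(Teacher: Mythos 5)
Your proposal follows essentially the same route as the paper's proof: reduce to the elementwise criterion for primeness of $\sqrt{\mathfrak{Q}}^F$, use the radical characterization to place a power of the product in $\mathfrak{Q}$, regroup that iterated power into blocks $a_i^{(s)}$ via commutativity and associativity, and then peel off the variables one at a time with the primary condition until a power of a single $a_i$ is forced into $\mathfrak{Q}$. One small remark: your concern about choosing a uniform exponent over the (allegedly finite) support of $g(a_1^n)$ is moot, since in a Krasner $F^{(m,n)}$-hyperring $g$ is an $F^n$-operation and that support is a singleton --- which is precisely what the paper exploits when it writes $supp(g(x^{(s)},e^{\prime^{(n-s)}}))=supp(g(g(x_1^n)^{(s)},e^{\prime^{(n-s)}}))$.
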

\begin{proof}
Let $supp(g(x_1^n)) \subseteq \sqrt{\mathfrak{Q}}^F$ for some $x_1^n \in \mathfrak{R}$ such that $x_1^{i-1},x_{i+1}^n \notin \sqrt{\mathfrak{Q}}^F$. Our task now is to show that $a_i \in \sqrt{\mathfrak{Q}}^F$. Let $x \in supp(g(x_1^n)$. From $supp(g(x_1^n)) \subseteq \sqrt{\mathfrak{Q}}^F$, it follows that there exists $s \in \mathbb{N}$ such that if $s \leq n$, then $supp(g(x^{(s)},e^{\prime^{(n-s)}}  )) \subseteq \mathfrak{Q}$.   Therefore we have $supp(g(g(x_1^n)^{(s)},e^{\prime^{(n-s)}}  )) \subseteq \mathfrak{Q}$, because $supp(g(x^{(s)},e^{\prime^{(n-s)}}  ))= supp(g(g(x_1^n)^{(s)},e^{\prime^{(n-s)}}  ))$. Thus $supp(g(x_i^{(s)},g(x_1^{i-1},e^\prime,x_{i+1}^n)^{(s)},e^{\prime^{(n-2s)}})) \subseteq  \mathfrak{Q}$ and so $supp(g(g(x_i^{(s)},e^{\prime^{(n-s)}}),g(g(x_1^{i-1},e^\prime,x_{i+1}^n)^{(s)},e^{\prime^{(n-s)}}),e^{\prime^{(n-2)}})) \subseteq  \mathfrak{Q}$. Since $\mathfrak{Q}$ is a primary $F$-hyperideal of $\mathfrak{R}$, we get $supp(g(g(x_1^{i-1},e^\prime,x_{i+1}^n)^{(s)},e^{\prime^{(n-s)}})) \subseteq \mathfrak{Q}$ or $supp(g(x_i^{(s)},e^{\prime^{(n-s)}})) \subseteq \sqrt{\mathfrak{Q}}^F$. Suppose that  $supp(g(g(x_1^{i-1},e^\prime,x_{i+1}^n)^{(s)},e^{\prime^{(n-s)}})) \subseteq \mathfrak{Q}$. Then we get $supp(g(g(x_1^{(s)},e^{\prime^{(n-s)}}), g(x_2^{i-1},e^{\prime^2},x_{i+1}^n)^{(s)},e^{\prime^{(n-s-1)}})) \subseteq \mathfrak{Q}$ which means $supp(g(x_1^{(s)},e^{\prime^{(n-s)}})) \subseteq \mathfrak{Q}$ or $supp(g(g(x_2^{i-1},e^{\prime^2},x_{i+1}^n)^{(s)},e^{\prime^{(n-s)}})) \subseteq \sqrt{\mathfrak{Q}}^F$. Since $x_1 \notin \sqrt{\mathfrak{Q}}^F$, then $supp(g(g(x_2^{i-1},e^{\prime^2},x_{i+1}^n)^{(s)},e^{\prime^{(n-s)}})) \subseteq \sqrt{\mathfrak{Q}}^F$. By continuing this process, since $x_1^{i-1},x_{i+1}^n \notin \sqrt{\mathfrak{Q}}^F$, we have $supp(g(x_i^{(s)},e^{\prime^{(n-s)}})) \subseteq \sqrt{\mathfrak{Q}}^F$. By using the definition of $F$-radical of $\mathfrak{Q}$, we conclude that $a_i \in \sqrt{\mathfrak{Q}}^F$. If $s=l(n-1)+1$, then by using a similar argument , one can easily complete the proof.
\end{proof}
\section{Conclusion}
In this paper, our purpose is to extend the study initiated in \cite{asli} about Krasner $F^{(m,n)}$-hyperrings by Farshi and Davvaz. We defined prime $F$-hyperideals, maximal $F$-hyperideals and primary $F$-hyperideals of a Krasner $F^{(m,n)}$-hyperring $\mathfrak{R}$. We obtain many specific results explaining the structures. The future work can be on defining the concept of   $\delta$-primary $F$-hyperideals  unifing the notions of the prime and primary $F$-hyperideal in a frame. 

\end{document}